\providecommand{\U}[1]{\protect \rule{.1in}{.1in}}
\newcommand{\be}{\begin{equation}}
\newcommand{\ee}{\end{equation}}
\newtheorem{lem}{Lemma}[section]
\newtheorem{thm}{Theorem}[section]
\newtheorem{rem}{Remark}[section]
\newtheorem{ex}{Example}[section]
\newenvironment{proof}[1][Proof]{\noindent \textbf{#1.} }{\  \rule{0.5em}{0.5em}}
\numberwithin{equation}{section}
\begin{document}

\title{Parareal algorithm via Chebyshev-Gauss spectral collocation method}
\author{Quan Zhou\thanks{College of Science, National University of Defense Technology,
Changsha, Hunan 410073, China. quanzhoujune@163.com.
Zhou is supported by the the Postgraduate Scientific Research Innovation Project of Hunan Province (No.CX20210012).}
\and Yicheng Liu\thanks{College of Science, National University of Defense Technology,
Changsha, Hunan 410073, China. liuyc2001@hotmail.com.}
\and Shu-Lin Wu\thanks{Corresponding author, School of Mathematics and Statistics,
Northeast Normal University, Changchun 130024, China. wushulin84@hotmail.com.}}
\date{}
\maketitle

\textbf{Abstract}.
We present the Parareal-CG algorithm for time-dependent differential equations in this work. 
The algorithm is a parallel in time iteration algorithm utilizes
Chebyshev-Gauss spectral collocation method for fine propagator $\mathcal{F}$
and backward Euler method for coarse propagator $\mathcal{G}$.
As far as we know, this is the first time that the
spectral method used as the $\mathcal{F}$ propagator of the parareal algorithm.
By constructing the stable function of the Chebyshev-Gauss spectral collocation method
for the symmetric positive definite (SPD) problem,
we find out that the Parareal-CG algorithm and the Parareal-TR algorithm,
whose $\mathcal{F}$ propagator is chosen to be a trapezoidal ruler, converge similarly, 
i.e., the Parareal-CG algorithm converge as fast as Parareal-Euler algorithm with sufficient Chebyhsev-Gauss points in every coarse grid.
Numerical examples including ordinary differential equations and time-dependent partial differential equations
are given to illustrate the high efficiency and accuracy of the proposed algorithm.
\newline

\textbf{Key words}. Parareal algorithm, Chebyshev-Gauss spectral collocation method,
nonlinear ODEs, SPD problems, time-dependent PDEs.\newline

\textbf{MSC-classification}. 65L05, 65L20, 65L60, 68Q60.
\section{Introduction}
%
We are considering utilizing the parareal algorithm for the initial-value problems in the following
\begin{equation}\label{IVP}
\left\{\begin{aligned}
&\frac{du}{dt}=\;f(t,u),\quad t\in[0,T],\\
&u(0)=\;u_0.
\end{aligned}\right.\end{equation}
where $f :(0,T)\times \mathbb{R}^m\rightarrow\mathbb{R}^m$, and $u_0\in\mathbb{R}^m$.
Parareal is a well-studied parallel in time algorithm
developed by Lions, Maday, and Turinici in 2001 \cite{LMT2001}.
The algorithm obtains the solution in a limited number of predictor-corrector iterations
utilizing random initial values at each temporal subinterval, stopping when a tolerance is reached.
The global error produced by this iterative method is equivalent to
that obtained by the serial of the fine propagator.
Due to the advantages of parareal algorithm including but not limited to efficiency and convergency,
many relevant methods have emerged in recent years \cite{G2015, LMT2001, PTSA2022, WZ2021}.
Together with these methods, the parareal algorithm has been identified in various fields of research, including
optimal control problems \cite{FVM2022, MSS2010, WH2018, WL2020},
wave equations \cite{DW2021, GW2020}, stochastic differential equations \cite{BW2020, HWZ2019, ZWZLZ2020},
Hamiltonian systems \cite{DBLM2013, GH2014}, incompressible flows \cite{CSdSF2022, DSW2022},
heat equations \cite{JL2023, LDT2022, WMOZ2022}, algebraic equations \cite{GKS2022, LA1993},
molecular dynamics \cite{BBMTZ2022, LLS2022},
and other partial differential equations \cite{BJMO2022, LW2022, LWWZ2022}.

The parareal algorithm combines two numerical methods which are the
coarse propagator $\mathcal{G}$ and fine propagator $\mathcal{F}$,
associated with the large time step $\Delta T$ and small time step $\Delta t$, respectively.
The ratio $J=\Delta T/\Delta t$ is assumed to be greater than 1. 
The $\mathcal{G}$ propagator is often chosen to be backward Euler method which is inexpensive and strongly stable 
so that it is available for the large time step $\Delta T$ computations. 
Additionally, numerical methods based on Taylor's expansion and quadrature formula
which is much cheaper are frequently chosen as $\mathcal{F}$ propagators,
and the convergence of various $\mathcal{F}$ propagators
has been thoroughly investigated for the symmetric positive definite (SPD) problem.
\begin{equation}\label{SPD}
u'(t)+Au(t)=g(t),\quad A\in\mathbb{R}^{m\times m}\; \text{symmetric positive definite (SPD)}.
\end{equation}
Gander and Vandewalle \cite{GV2007} proposed the convergence theorem of parareal algorithm and
illustrated numerically that Parareal-Euler algorithm using for $\mathcal{F}$ the backward-Euler method converges rapidly.
Based on which, Mathew, Sarkis and Schaerer \cite{MSS2010} proved theoretically
that the parareal-Euler algorithm converged robustly and the convergence factor is $0.298$ for $J\geq 2$.
Wu \cite{W2015} demonstrated that the robust convergence also holds for $\mathcal{F}$ propagator
chosen to be second-order diagonal implicit Runge-Kutta (DIRK2) method
and TR/BDF2 method (ode23tb solver for ODEs in Matlab).
For $J\geq 2$, the convergence factor of the Parareal-2sDIRK and Parareal-TR/BDF2 are $0.316$ and $0.333$ respectively.
Wu and Zhou \cite{WZ2015} also showed the analysis $\mathcal{F}$ propagator for
the third-order diagonal implicit Runge-Kutta method with a convergence factor $0.333$ for $J\geq4$;
they also proved that for trapezoidal formula and fourth-order Gauss-Runge-Kutta integrator chosen to be $\mathcal{F}$ propagator,
there exists a $J_{\min}^*$ depends on both spectral radius of $A$ and the step size $\Delta t$
which make the convergence factor be $0.333$ for $J\geq J_{\min}^*$.
Recently, Yang, Yuan and Zhou \cite{YYZ2022} gave a more general result
stating that if the $\mathcal{F}$ propagator is strongly stable single step integrators,
there must exist a positive $J_{\min}^*$
(independent of step sizes $\Delta T$, $\Delta t$, terminal time $T$, problem data $u_0$ and $f$,
as well as the spectral radius of $A$) such that
the parareal algorithm converges linearly with convergence factor close to $0.3$ for all $J\geq J_{\min}^*$.

In this work, 
we choose Chebyshev-Gauss spectral collocation method \cite{YW2015}
for fine propagator and present Parareal-CG algorithm.
The Chebyshev-Gauss spectral collocation method is an overall iteration method 
with $M$ Chebyshev-Gauss points in the computation interval,  
so it combines the advantages of spectral accuracy and computational efficiency.
Also, it is unnecessary to solve implicit equations for every fine time step $\Delta t$
as classical methods via Taylor's expansion mentioned in \cite{GV2007, W2015, WZ2015, YYZ2022}.
We would briefly introduce the spectral collocation methods for ODEs.
Clenshaw and Norton first presented the Chebyshev-Picard method
for solving nonlinear ordinary differential equations in 1963,
in which the nonlinear term was approximated by Chebyshev series,
so that the method was collocated at Chebyshev-Gauss-Lobatoo points and implemented by Picard iteration.
Later, a matrix-vector form of the method was introduced by Feagin and Nacozy \cite{FN1983},
greatly increases the computation efficiency.
Yang and Wang \cite{YW2015} proposed Chebyshev-Gauss spectral collocation method
via Chebyshev-Gauss points for ODEs in a single interval and analyzed the convergence by the $hp$ version.
The method demonstrated significant advantages in astrodynamics simulations \cite{B2010, BJ2011, WJ2019, WNL2020}
because of its spectral accuracy and computational efficiency.
Additionally, some relevant spectral collocation methods for solving ODEs 
are also proposed in recent years \cite{GW2009, GW2010, WM2016}.

We present the matrix-vector form of Chebyshev-Gauss spectral collocation method
by the approach in \cite{FN1983}.
Based on which we construct its stability function
and illustrate the convergence of the Parareal-CG method numerically.
We find out there exists a $M_{\min}^*$ depends on
spectral radius of $A$ and the step size $\Delta T$
which makes the convergence factor be $0.333$ for $M \geq M_{\min}^*$.
The proposed method has the same convergence as Parareal-TR and Parareal-Gauss4 methods.

The rest of the paper is organized as follows,
in section \ref{sec:reCG} we recall the Chebyshev-Gauss spectral collocation method and its convergence theorem.
Parareal-CG algorithm is proposed in \ref{sec:para-CG}.
We provide stability function of Chebyshev-Gauss spectral collocation method
and observe the convergence of Parareal-CG algorithm in \ref{sec:con}.
Several numerical experiments are carried out in Section \ref{sec:num}
to demonstrate the high accuracy and convergency of the proposed method.
We finally give some conclusions in Section \ref{sec:conclusion}.

\section{Revisit of Chebyshev-Gauss Spectral Collocation Method}\label{sec:reCG}

In this section, we revisit the Chebyshev-Gauss spectral collocation method
proposed by Yang and Wang \cite{YW2015}
in a general interval $[a,b]$, ($b>a>0$) with the initial condition $u(a)=u_a$.

Let $T_l(\tau)=\cos(l \arccos(\tau))$ be the standard Chebyshev polynomials of degree $l$, ($l=0,1,\cdots$) with $\tau\in[-1,1]$,
then by using the affine transformation 
we can define the shifted Chebyshev polynomials
\begin{equation}\label{shifted_Che}
\widetilde{T}_{l}(t)=T_l\left(\frac{2(t-a)}{b-a}-1\right),\quad t\in[a,b],\quad l = 0,1,\cdots
\end{equation}
According to the definition, one can obtain the following shifted Chebyshev derivative relationship directly
\begin{equation}\label{Che_diff}\begin{split}
&\widetilde{T}_{0}'(t)=\;0, \quad\quad \widetilde{T}_1'(t)=\;\frac{2}{b-a},\\
&\frac{1}{l+1}\widetilde{T}_{l+1}'(t)-\frac{1}{l-1}\widetilde{T}_{l-1}'(t)
=\;\frac{4}{b-a}\widetilde{T}_l(t),\quad l\geq 2.
\end{split}\end{equation}
Let $\tau_m$ denote the standard Chebyshev-Gauss (CG) points in $(-1,1)$, 
\begin{equation}
\tau_m=-\cos\frac{(2m+1)\pi}{2M+2}, \quad m=0,1,\cdots,M,
\end{equation}
the corresponding shifted Chebyshev-Gauss (CG) points $t_m$ has the form
\begin{equation}\label{shifted_CG}
t_m = \frac{b-a}{2}\tau_m+\frac{a+b}{2}, \quad m=0,1,\cdots,M,
\end{equation}
which are the zeros of $\widetilde{T}_{M+1}(t)$.

Denote $\mathcal{P}_{M+1}(a,b)$ be a set of polynomials of degree at most $M+1$ in $(a,b)$,
the Chebyshev-Gauss spectral collocation method is to seek
$u_M(t)\in\mathcal{P}_{M+1}(a,b)$ defined by
\begin{equation}\label{uM}
u_M(t) =\; \sum_{m=0}^{M+1}\hat{u}_m\widetilde{T}_{m}(t),
\end{equation}
such that
\begin{equation}\label{ODETn}
\left\{\begin{aligned}
&\frac{d}{dt}u_M(t_m)=\;\mathcal{I}_{M}f(t_m,u_M(t_m)), \quad \quad m=0,1,\cdots,M,\\
&u_M(a) =\; u_a,
\end{aligned}\right.
\end{equation}
where $\mathcal{I}_{M}f\left(t,u(t)\right):C(a,b)\rightarrow\mathcal{P}_{M}(a,b)$
is the Chebyshev interpolation of $f\left(t,u(t)\right)$ defined by
\begin{equation}\begin{split}
\mathcal{I}_{M}f(t,u_M(t)) =\; \sum_{m=0}^{M}\hat{f}_m\widetilde{T}_{m}(t),
\end{split}\end{equation}
the coefficients $\{\hat{f}_m\}_{m=0}^{M}$ are determined by the forward discrete
Chebyshev transform
\begin{equation}\label{fk}\begin{split}
\hat{f}_m=\frac{2}{c_m(M+1)}\sum_{l=0}^Mf(t_l,u_M(t_l))\widetilde{T}_{m}(t_l),
\end{split}\end{equation}
where $c_0=2$, $c_m=1$ for $m\geq 1$.
Then by the shifted Chebyshev derivative relationship \eqref{Che_diff},
we can derive the coefficients $\{\hat{u}_m\}_{m=0}^{M+1}$ in \eqref{uM} by
\begin{equation}\label{uk}\begin{split}
&\hat{u}_{M+1} =\; \frac{(b-a)}{4(M+1)}\hat{f}_M,\quad \hat{u}_{M} =\; \frac{(b-a)}{4M}\hat{f}_{M-1},\\
&\hat{u}_{m} =\; \frac{b-a}{4m}(c_{m-1}\hat{f}_{m-1}-\hat{f}_{m+1}),\quad 1\leq m\leq M-1,\\
&\hat{u}_{0} =\; u_{a} - \sum_{k=1}^{M+1}(-1)^{k}\hat{u}_{k}.
\end{split}\end{equation}

Yang and Wang \cite{YW2015} presented the error estimate for the $hp$-version
of the single interval Chebyshev-Gauss spectral collocation method,
before introduce the theorem, we first present some notations used throughout the error estimate
\begin{itemize}
\item $H^r_{\omega}(a,b)$, $(r\geq 0)$ denotes the weighted Sobolev space
with certain weight function $\omega=(t-a)(b-t)$ in $(a,b)$,
especially, $H^0_{\omega}(a,b)=L^2_{\omega}(a,b)$.
\item $\|v\|_{\omega}$ denotes the norm of the space $L^2_{\omega^{-1/2}}(a,b)$.
\end{itemize}
The error estimate theorem is stated as follows.
\begin{thm}\label{theo_CGspec} 
Assume that $f(t,z)$ fulfills the Lipschitz condition,
\begin{equation}
|f(z_1,t)-f(z_2,t)|\leq L|z_1-z_2|, \quad L>0,
\end{equation}
and $0<L (b-a)<\beta<\frac{1}{4}$ ($\beta$ is a certain constant) holds.
Then for any $u\in H^r_{\omega^{r-\frac{3}{2}}}(a,b)$ with integers $2\leq r\leq N+1$, we have
\begin{equation}\begin{split}
\|u-u_M\|^2_{L^2(a,b)}\leq&\; \frac{b-a}{2} \|u-u_M\|^2_{\omega}
          \leq C_{\beta}(b-a)^3M^{4-2r}\int_{a}^{b}\omega^{r-\frac{3}{2}}(t)\left(\frac{d^r}{dt^r}u(t)\right)^2dt,\\
|u(b)-u_M(b)|\leq&\; C_\beta (b-a)^2M^{4-2r}\int_{a}^{b}\omega^{r-\frac{3}{2}}(t)\left(\frac{d^r}{dt^r}u(t)\right)^2dt.
\end{split}\end{equation}
where $C_{\beta}$ is a positive constant depending only on $\beta$.
\end{thm}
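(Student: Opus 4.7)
The plan is to convert the collocation scheme into a Picard integral equation, split the resulting error into a Lipschitz piece and a Chebyshev interpolation remainder, and close via a weighted Hardy--Gronwall argument. Since $u_M'$ and $\mathcal{I}_M f(\cdot,u_M)$ are both polynomials of degree at most $M$ that coincide at the $M+1$ Chebyshev-Gauss nodes $\{t_m\}_{m=0}^M$, they must agree identically on $[a,b]$. Integrating from $a$ to $t$ and using $u_M(a)=u_a$ yields
\begin{equation*}
u_M(t) = u_a + \int_a^t \mathcal{I}_M f(s,u_M(s))\,ds,
\end{equation*}
and subtracting this from the Picard form $u(t)=u_a+\int_a^t f(s,u(s))\,ds$ of the exact solution gives the error decomposition
\begin{equation*}
e(t) := u(t)-u_M(t) = \int_a^t\bigl[f(s,u(s))-f(s,u_M(s))\bigr]\,ds + \int_a^t\bigl[f(s,u_M(s))-\mathcal{I}_M f(s,u_M(s))\bigr]\,ds.
\end{equation*}

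For the second (interpolation) integral I would invoke the standard Chebyshev-Gauss approximation estimate in the weighted norm $\|\cdot\|_\omega$: after affine rescaling from $[-1,1]$ to $[a,b]$, this reads $\|g-\mathcal{I}_M g\|_\omega \leq C (b-a)^{r-1/2} M^{-r}$ times a weighted norm of $g^{(r)}$. Applied to $g(s)=f(s,u_M(s))$, and using the identity $u_M'=\mathcal{I}_M f(\cdot,u_M)$ together with the Lipschitz condition to chain-rule derivatives of $g$ back to derivatives of $u$ modulo lower-order perturbations, one arrives at a bound controlled by $\int_a^b\omega^{r-3/2}(t)(u^{(r)}(t))^2\,dt$. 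For the first (Lipschitz) integral I would square and integrate against the Chebyshev weight $\omega^{-1/2}$ and apply a weighted Hardy inequality of the form $\int_a^b\omega^{-1/2}(t)\bigl(\int_a^t|e(s)|\,ds\bigr)^2 dt \leq C(b-a)^2\|e\|_\omega^2$, so that the resulting prefactor is at most $C L^2 (b-a)^2 \leq C\beta^2$. The hypothesis $\beta<\tfrac{1}{4}$ is precisely the threshold at which this prefactor is strictly less than one, allowing $\|e\|_\omega^2$ to be absorbed into the left-hand side and producing a clean estimate of $\|e\|_\omega^2$ in terms of the interpolation remainder alone.

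The $L^2(a,b)$ estimate then follows from the elementary inequality $\|e\|^2_{L^2(a,b)}\leq\tfrac{b-a}{2}\|e\|_\omega^2$, while the endpoint bound at $t=b$ is recovered by evaluating the error equation at $t=b$ and combining Cauchy--Schwarz in the weighted integral with the already established $\|e\|_\omega$ bound; this accounts for the $(b-a)^3$ versus $(b-a)^2$ discrepancy in the two stated inequalities. The main obstacle is the weighted Gronwall / absorption step: because $\omega=(t-a)(b-t)$ degenerates at both endpoints, a naive pointwise Gronwall loop does not translate into a usable weighted $L^2$ bound, and one is forced to work in squared, integrated form using a sharp weighted Hardy inequality whose constant is exactly what pins down the smallness assumption $L(b-a)<\beta<\tfrac{1}{4}$. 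The remaining work is purely bookkeeping of $(b-a)$-powers through the affine rescaling, which delivers the stated prefactors and the spectral rate $M^{4-2r}$.
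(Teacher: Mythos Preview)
The paper does not actually prove this theorem; it is quoted verbatim from Yang and Wang~\cite{YW2015} and no argument is supplied. So there is no ``paper's own proof'' to compare against here.

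That said, your outline has a genuine gap in the splitting. You place the interpolation remainder on $g(s)=f(s,u_M(s))$ and then propose to ``chain-rule derivatives of $g$ back to derivatives of $u$'' via the Lipschitz condition. This does not work: the Lipschitz hypothesis controls only first differences, so for $r\ge 2$ it gives you no access to $g^{(r)}$, and nothing in the assumptions ties higher derivatives of $f(\cdot,u_M)$ to $u^{(r)}$. The function whose regularity you actually control is $u'=f(\cdot,u)$, not $f(\cdot,u_M)$. The decomposition that makes the argument close is
\[
e(t)=\int_a^t\bigl[u'(s)-\mathcal{I}_M u'(s)\bigr]\,ds
     +\int_a^t \mathcal{I}_M\bigl[f(s,u(s))-f(s,u_M(s))\bigr]\,ds,
\]
so that the Chebyshev--Gauss interpolation estimate is applied to $u'$ (this is what produces the factor $\int_a^b \omega^{r-3/2}(u^{(r)})^2$), while the Lipschitz piece now carries an $\mathcal{I}_M$ and therefore requires a weighted \emph{stability} bound for the interpolant, not just the Lipschitz constant. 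It is precisely the operator norm of $\mathcal{I}_M$ in the relevant weighted space, combined with the Hardy-type integration inequality, that fixes the numerical threshold $\beta<\tfrac14$; with your split the constant in the absorption step would be different (and in fact undetermined, since you cannot bound the remainder). Once you rearrange the split this way, the remainder of your plan---squared weighted norm, absorption under the smallness condition, and the endpoint bound via Cauchy--Schwarz---matches the Yang--Wang argument.
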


In actual computation, we use Picard iteration procedure to compute the coefficients $\{\hat{u}_m\}_{m=0}^M$.
The approach is simple to implement, especially for the complex nonlinear problems.
The $p$-th ($p=1,2,\cdots$) Picard iteration form of \eqref{ODETn} is
\begin{equation}
\frac{d}{dt}u^{p+1}_M(t_m)=\;\mathcal{I}_{M}f\left(t_m,u^p_M(t_m)\right),
\end{equation}
if the equation satisfies the convergence condition in Theorem \ref{theo_CGspec},
the iteration solution $u_M^p(t)$
will converge to the numerical solution $u_M(t)$ with large enough $p$,
and the convergence is of order one, that is to say there exists a constant $0<C_p<1$ such that
$\|e_{p+1}\|_{\infty}\leq C_p\|e_{p}\|_{\infty}$ with the definition $e_{p}:=u^M_p(t)-u^M(t)$.
\begin{algorithm}[htb]
\caption{Chebyshev-Gauss Spectral Collocation Algorithm}\label{Alg:CGspec}
\begin{algorithmic}
\STATE\textbf{Input}: Provide the initial guess of $\{u^M_0(t_m)\}_{m=0}^M$, the tolerance $\epsilon$.
\STATE \textbf{For} $p = 0, 1,\cdots$
\STATE\textbf{Step 1}. Evaluate the values of $\{f(t_m,u_p^M(t_m))\}_{m=0}^M$,
\STATE\textbf{Step 2}. Compute the coefficients $\{\hat{f}_{m}^p\}_{m=0}^M$ by \eqref{fk}.
\STATE\textbf{Step 3}. Compute the coefficients $\{\hat{u}_{m}^p\}_{m=0}^{M+1}$ by \eqref{uk}.
\STATE\textbf{Step 4}. Update the data of $\{u^{p+1}_M(t_m\}_{m=0}^M$ by \eqref{uM}.
\STATE\textbf{Step 5}. If the iteration error satisfies the stopping criterion,
$$\|u^{p+1}_M(t)-u^{p}_M(t)\|_\infty<\epsilon,$$
\quad\quad\quad\quad terminate the iteration; otherwise go back to Step 1.
\STATE\textbf{Step 6}. Compute $u_M^{p+1}(b)=\sum_{m=0}^{M+1}\hat{u}_{m}^p$.
\end{algorithmic}
\end{algorithm}

We designate $\mathcal{F}_{\rm CG}$ as the numerical propagator defined
by the Chebyshev-Gauss spectral collocation method at the conclusion of the section.
The numerical output of Algorithm \ref{Alg:CGspec} with $M$ points in the interval $[a,b]$
is represented as $\mathcal{F}_{\rm CG}(a,u_a,M,b-a)$. That is,
\begin{equation}\label{F_CG}
u_M^{p+1}(b)=\mathcal{F}_{\rm CG}(a,u_a,M,b-a).
\end{equation}
\section{Parareal-CG algorithm}\label{sec:para-CG}
The parareal algorithm introduced by Gander and Vandewalle \cite{GV2007} is revisited in this section,
using the Chebyshev-Gauss spectral collocation method as the fine propagator.
First, we divide the whole time interval $[0,T]$ uniformly 
by $0=T_0<T_1<\cdots<T_N=T$ and define $\Delta T=T/N$.
Second, we divide each $(T_n,T_{n+1})$ by $M$($\geq 2$) shifted Chebyshev-Gauss points \eqref{shifted_CG}.
Then, the low-order and inexpensive numerical method $\mathcal{G}$ propagator
is applied to the coarse time grids,
while the Chebyshev-Gauss spectral collocation method $\mathcal{F_{\rm CG}}$ propagator
having spectral accuracy is utilized in the fine grids.
The time-sequential and time-parallel parts of the algorithm
are denoted by the symbols $\ominus$ and $\oplus$, respectively.
The following is the parareal method employing $\mathcal{F_{\rm CG}}$ as the fine propagator.
\begin{algorithm}[h]
\caption{Parareal-CG Algorithm}\label{Alg:paraCG}
\begin{algorithmic}
\STATE $\ominus$ \textbf{Initialization}: Compute sequentially $u_{n+1}^{0}=\mathcal{G}(T_n,u_n^0,\triangle T)$ with $u_0^0=u_0$, $n=0,1,\cdots,N-1$;
\STATE \textbf{For} $k = 0, 1,\cdots$
\STATE $\oplus$ \textbf{Step 1}. On each subinterval $[T_n,T_{n+1}]$, compute
$\tilde{u}_{n+1}=\mathcal{F_{\rm CG}}(T_{n},u^k_n,M,\Delta T)$. 
\STATE $\ominus$ \textbf{Step 2}. Perform sequential corrections
\begin{equation}
u_{n+1}^{k+1}=\mathcal{G}(T_n,u_n^{k+1},\triangle T)+\tilde{u}_{n+1}-\mathcal{G}(T_n,u_n^{k},\triangle T),
\end{equation}
where $u^{k+1}_0 = u_0$, $n = 0, 1,\cdots,N-1$;
\STATE $\ominus$ \textbf{Step 3}. If $\{u^{k+1}_n\}^N_{n=1}$ satisfies the stopping criterion,
terminate the iteration and output $\{u^{k+1}_n\}^N_{n=1}$; otherwise go back to Step 1.
\end{algorithmic}
\end{algorithm}

Given that the implicit Euler method is $L$-stable
and feasible for high coarse time step sizes $\Delta T$
forced by the parareal algorithm, using it as the coarse propagator makes sense.
Other reliable implicit numerical methods, such implicit Runge-Kutta methods,
are also available for use as the $\mathcal{G}$ propagator,
although they are all significantly more costly than the implicit Euler method.
In this paper, we apply the Chebyshev-Gauss spectral collocation method to the fine propagator,
and the compact form of the Parareal-CG method may be expressed as
\begin{equation}
u_{n+1}^{k+1}=\mathcal{G}(T_n,u_n^{k+1},\triangle T)+\mathcal{F_{\rm CG}}(T_{n},u^k_n,M,\Delta T)-\mathcal{G}(T_n,u_n^{k},\triangle T).
\end{equation}

For comparison, we also take into account other high-order and expensive numerical methods like trapezoidal formula.
Accordingly, each interval $[T_n,T_{n+1}]$ should be divided into $J$ ($\geq 2$) small time-intervals
$[T_{n+\frac{j}{J}},T_{n+\frac{j+1}{J}}]$, $j=0,1,\cdots, J-1$.
We assume the intervals are of uniform size, therefore, $\Delta t=\frac{\Delta T}{J}$.
After that, the parareal algorithm can be derived by
\begin{algorithm}[h]
\caption{Parareal Algorithm}\label{Alg:para}
\begin{algorithmic}
\STATE $\ominus$ \textbf{Initialization}: Compute sequentially $u_{n+1}^{0}=\mathcal{G}(T_n,u_n^0,\triangle T)$ with $u_0^0=u_0$, $n=0,1,\cdots,N-1$;
\STATE \textbf{For} $k = 0, 1,\cdots$
\STATE $\oplus$ \textbf{Step 1}. On each subinterval $[T_n,T_{n+1}]$, compute
$\tilde{u}_{n+\frac{j+1}{J}}=\mathcal{F}(T_{n+\frac{j}{J}},\tilde{u}_{n+\frac{j}{J}},\frac{\Delta T}{J})$
with  initial value $\tilde{u}_n = u_n^k$, where $T_{n+\frac{j}{J}}=T_n+\frac{j\Delta T}{J}$ and $j=0,1,\cdots,J-1$;
\STATE $\ominus$ \textbf{Step 2}. Perform sequential corrections
\begin{equation}
u_{n+1}^{k+1}=\mathcal{G}(T_n,u_n^{k+1},\triangle T)+\tilde{u}_{n+1}-\mathcal{G}(T_n,u_n^{k},\triangle T),
\end{equation}
where $u^{k+1}_0 = u_0$, $n = 0, 1,\cdots,N-1$;
\STATE $\ominus$ \textbf{Step 3}. If $\{u^{k+1}_n\}^N_{n=1}$ satisfies the stopping criterion,
terminate the iteration and output $\{u^{k+1}_n\}^N_{n=1}$; otherwise go back to Step 1.
\end{algorithmic}
\end{algorithm}

The compact form of the parareal algorithm can be derived by
\begin{equation}
u_{n+1}^{k+1}=\mathcal{G}(T_n,u_n^{k+1},\triangle T)+\mathcal{F}^J(T_n,u_n^k,\triangle t)-\mathcal{G}(T_n,u_n^{k},\triangle T),
\end{equation}
where $\mathcal{F}^J(T_n,u_n^k,\triangle t)$ stands for
the result of running $J$ steps of the fine propagator $\mathcal{F}$ with initial value $u_n^k$
and the small step-size $\Delta t$.
\section{Convergence Analysis}\label{sec:con}
Based on the parareal convergence analysis given by Gander and Vandewalle \cite{GV2007},
the convergence of Parareal-CG method is analysed
by constructing the stability function of Chebyshev-Gauss spectral collocation method
in this section.
\subsection{Stability function of Chevyshev-Gauss Spectral method}
In order to obtain the convergence factor $\mathcal{K}(z,M)$ of Parareal-CG algorithm in Algorithm \ref{Alg:paraCG},
in this subsection we present the stability function of Chebyshev-Gauss spectral collocation method $\mathcal{R}_{\rm CG}(z,M)$.
\begin{lem}\label{RMCPIlem}
For given $M$ and $z:=\lambda\triangle T$, the stability function $\mathcal{R}_{\rm CG}(z,M)$
of Chebyshev-Gauss spectral collocation method in $[T_n,T_{n+1}]$, ($n=1,2,\cdots, N$) is
\begin{equation}\label{RMCPI}
\mathcal{R}_{\rm CG}(z,M)=\boldsymbol{T}(\boldsymbol{I_2}-z\boldsymbol{C_\alpha}
(\boldsymbol{I_1}+z\boldsymbol{T_1}\boldsymbol{C_{\alpha}})^{-1}\boldsymbol{T_1})\boldsymbol{E},
\end{equation}
where $\boldsymbol{T_2}$ and $\boldsymbol{C_{\alpha}}$ are the coefficient matrices
defined in \eqref{mar_f} and \eqref{sch_mv}, respectively;
$\boldsymbol{I_1}$ and $\boldsymbol{I_2}$ are two identity matrices of the size $(M+1)$ and $(M+2)$, respectively;
$\boldsymbol{T}=[1,1,\cdots,1]_{_{1\times(M+2)}}$ and $\boldsymbol{E}=[1,0,\cdots,0]^{\top}_{_{(M+2)\times1}}$
are two constant vectors.
\end{lem}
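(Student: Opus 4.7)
The plan is to apply the Chebyshev-Gauss collocation scheme to the scalar test equation $u'(t)=-\lambda u(t)$ on $[T_n,T_{n+1}]$ with $u(T_n)=u_n$, so that the stability function is read off as the ratio $\mathcal{R}_{\rm CG}(z,M)=u_M(T_{n+1})/u_n$ with $z=\lambda\Delta T$. Let $\hat{\boldsymbol{u}}=[\hat{u}_0,\hat{u}_1,\ldots,\hat{u}_{M+1}]^\top\in\mathbb{R}^{M+2}$ and $\hat{\boldsymbol{f}}=[\hat{f}_0,\ldots,\hat{f}_M]^\top\in\mathbb{R}^{M+1}$ denote the coefficient vectors. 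For the test equation, $f(t_l,u_M(t_l))=-\lambda\sum_{k=0}^{M+1}\hat{u}_k\widetilde{T}_k(t_l)$; inserting this into the forward transform \eqref{fk} and collecting the linear action on $\hat{\boldsymbol{u}}$ gives $\hat{\boldsymbol{f}}=-\lambda\boldsymbol{T_1}\hat{\boldsymbol{u}}$, where $\boldsymbol{T_1}$ is the $(M+1)\times(M+2)$ matrix from \eqref{mar_f}. Similarly, the reconstruction recurrences \eqref{uk} express $\hat{u}_1,\ldots,\hat{u}_{M+1}$ as a linear function of $\hat{\boldsymbol{f}}$, and the closure condition $\hat{u}_0=u_n-\sum_{k=1}^{M+1}(-1)^k\hat{u}_k$ pins down $\hat{u}_0$; stacking these rows yields $\hat{\boldsymbol{u}}=\boldsymbol{E}u_n+\boldsymbol{C_\alpha}\hat{\boldsymbol{f}}$ with $\boldsymbol{C_\alpha}\in\mathbb{R}^{(M+2)\times(M+1)}$ from \eqref{sch_mv}.

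Next I would eliminate $\hat{\boldsymbol{f}}$. Substituting the first identity into the second produces
\begin{equation*}
(\boldsymbol{I_2}+z\boldsymbol{C_\alpha}\boldsymbol{T_1})\hat{\boldsymbol{u}}=\boldsymbol{E}\,u_n.
\end{equation*}
Since $\widetilde{T}_m(T_{n+1})=T_m(1)=1$ for every $m$, the output at $T_{n+1}$ is $u_M(T_{n+1})=\sum_{m=0}^{M+1}\hat{u}_m=\boldsymbol{T}\hat{\boldsymbol{u}}$, and hence $\mathcal{R}_{\rm CG}(z,M)=\boldsymbol{T}(\boldsymbol{I_2}+z\boldsymbol{C_\alpha}\boldsymbol{T_1})^{-1}\boldsymbol{E}$.

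To recover the form written in \eqref{RMCPI} I would then invoke the Sherman-Morrison-Woodbury identity $(\boldsymbol{I}+UV)^{-1}=\boldsymbol{I}-U(\boldsymbol{I}+VU)^{-1}V$ with $U=z\boldsymbol{C_\alpha}$ and $V=\boldsymbol{T_1}$. This rewrites the $(M+2)$-dimensional inverse as $(\boldsymbol{I_2}+z\boldsymbol{C_\alpha}\boldsymbol{T_1})^{-1}=\boldsymbol{I_2}-z\boldsymbol{C_\alpha}(\boldsymbol{I_1}+z\boldsymbol{T_1}\boldsymbol{C_\alpha})^{-1}\boldsymbol{T_1}$, and sandwiching between $\boldsymbol{T}$ on the left and $\boldsymbol{E}$ on the right yields \eqref{RMCPI}.

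The main obstacle I anticipate is the bookkeeping that turns the piecewise reconstruction \eqref{uk} into a single matrix $\boldsymbol{C_\alpha}$: the exceptional rules for $\hat{u}_{M+1}$ and $\hat{u}_M$, the three-term relation for $1\leq m\leq M-1$, and the closure equation for $\hat{u}_0$ must all be arranged so that $\hat{\boldsymbol{u}}$ ends up as $\boldsymbol{E}u_n+\boldsymbol{C_\alpha}\hat{\boldsymbol{f}}$ with exactly the $\boldsymbol{C_\alpha}$ of \eqref{sch_mv}. Once $\boldsymbol{C_\alpha}$ and $\boldsymbol{T_1}$ are correctly identified and invertibility of $\boldsymbol{I_2}+z\boldsymbol{C_\alpha}\boldsymbol{T_1}$ (equivalently, of $\boldsymbol{I_1}+z\boldsymbol{T_1}\boldsymbol{C_\alpha}$) is noted for the admissible values of $z$, the rest of the argument is routine linear algebra and requires no further estimates.
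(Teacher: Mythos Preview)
Your overall strategy is sound and does lead to \eqref{RMCPI}, but the two intermediate identities are mis-stated. With the paper's matrices, $\boldsymbol{T_1}$ (defined in \eqref{T1}, not \eqref{mar_f}) sends the coefficient vector $\hat{\boldsymbol{u}}$ to the vector of \emph{nodal values} $\boldsymbol{u}=(u_M(t_l))_l$, and $\boldsymbol{C_\alpha}=\tfrac14\boldsymbol{RSVT_2}$ from \eqref{sch_mv} acts on nodal values of $f$, not on $\hat{\boldsymbol{f}}$. Thus the correct pair of relations is $\boldsymbol{f}=-\lambda\boldsymbol{T_1}\hat{\boldsymbol{u}}$ and $\hat{\boldsymbol{u}}=\boldsymbol{E}u_n-\Delta T\,\boldsymbol{C_\alpha}\boldsymbol{f}$; if you actually insert the nodal values into the forward transform \eqref{fk} you get $\hat f_m=-\lambda\hat u_m$ for $0\le m\le M$ (because $T_{M+1}$ vanishes at the CG nodes), which is not $-\lambda\boldsymbol{T_1}\hat{\boldsymbol{u}}$. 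The composite fixed-point equation you wrote, $(\boldsymbol{I_2}+z\boldsymbol{C_\alpha}\boldsymbol{T_1})\hat{\boldsymbol{u}}=\boldsymbol{E}u_n$, is nevertheless correct with the paper's $\boldsymbol{C_\alpha}$ and $\boldsymbol{T_1}$, and your Woodbury step then yields \eqref{RMCPI} exactly.

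The paper reaches the same formula by a slightly different route that avoids Woodbury: it first passes to nodal values, obtaining the $(M{+}1)$-dimensional fixed point $\boldsymbol{u^*}=(\boldsymbol{I_1}+z\boldsymbol{T_1}\boldsymbol{C_\alpha})^{-1}\boldsymbol{T_1}\boldsymbol{U_0}$ directly, and only then substitutes back to get $\hat{\boldsymbol{u}^*}=\boldsymbol{U_0}-z\boldsymbol{C_\alpha}\boldsymbol{u^*}$; evaluating at $T_{n+1}$ via $\boldsymbol{T}\hat{\boldsymbol{u}^*}$ gives \eqref{RMCPI} with the small inverse already in place. Your coefficient-space derivation is equally valid once the $\hat{\boldsymbol f}$/$\boldsymbol f$ confusion is repaired; the paper's nodal-space derivation is a bit more direct because the $(M{+}1)$-dimensional inverse appears naturally rather than through a matrix identity.
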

\begin{proof}
We first introduce the matrix-vector form of the Chebyshev-Gauss spectral collocation method
to be able to express $\mathcal{R}_{\rm CG}$ explicitly.
Denote the evaluations of the approximated polynomial for the $p$-th ($p=1,2,\cdots$) iteration $u^p_M(t)$
at the shifted Chebyshev-Gauss points $\{t_m\}_{m=0}^M$, ($n=0,1,\cdots,N-1$) by
\begin{equation}
\boldsymbol{u_p}=\;[u_p^M(t_0),u_p^M(t_1),\cdots,u_p^M(t_M)]^{\top}_{(M+1)\times 1},
\end{equation}
Equation \eqref{uk} indicates that the vector $\boldsymbol{u_p}$
can be expressed as a consequence of the fact $\mathcal{T}_l(t_m)=T_l(\tau_m)$, ($l=1,2,\cdots$; $m=0,1,\cdots,M$)
\begin{equation}
\boldsymbol{u_p}=\;\boldsymbol{T_1\hat{u}_p},
\end{equation}
where $\boldsymbol{T_1}$ is a coefficient matrix defined by
\begin{equation}\label{T1}\begin{split}
\small{\boldsymbol{T_1}=\; \mathop{\renewcommand\arraystretch{1.5}
\begin{bmatrix}
T_0(\tau_0)  &T_1(\tau_0) & \cdots & T_{M+1}(\tau_0)  \\
T_0(\tau_1)  &T_1(\tau_1) & \cdots & T_{M+1}(\tau_1)  \\
\vdots  &\vdots & \ddots  &\vdots   \\
T_0(\tau_M)  &T_1(\tau_M) & \cdots & T_{M+1}(\tau_M)  \\
\end{bmatrix}}\limits^{(M+1)\times(M+2)}}.
\end{split}\end{equation}
Denote $\hat{f}(t_m,u^p_M(t_m))=\hat{f}_{m}^p$ and suppose the initial value be $u(T_n)=u_{T_n}$,
then the coefficient vector $\boldsymbol{\hat{u}^p}$ can be expressed by
\begin{small}
\begin{equation}\begin{split}
\boldsymbol{\hat{u}^p}=\;&[\hat{u}_{0}^p,\hat{u}_{1}^p,\cdots,\hat{u}_{M+1}^p]^{\top}_{_{(M+2)\times 1}}\\
=\;&\left[u_{T_n} + \sum_{m=1}^{M+1}(-1)^{m-1}\hat{u}_{m}^p,
\frac{\Delta T}{4} (2\hat{f}_{0}^{p-1}-\hat{f}_{2}^{p-1}),
\cdots,
\frac{\Delta T(\hat{f}_{M-2}^{p-1}-\hat{f}_{M}^{p-1})}{4(M-1)} ,
\frac{\Delta T}{4M} \hat{f}_{M-1}^{p-1},
\frac{\Delta T}{4(M+1)} \hat{f}_{M}^{p-1}\right]^{\top}\\
=\;&
\mathop{\renewcommand\arraystretch{1.5}
\begin{bmatrix}
u_{T_n}\\ 0\\ 0\\ \vdots\\ 0 \\0 \\0
\end{bmatrix}}
+\frac{\Delta T}{4}
\mathop{\renewcommand\arraystretch{1.5}
\begin{bmatrix}
1  & & & & & & \\
  & 1 & & & & &\\
  & &\frac{1}{2} & & & &\\
  & & & \ddots & & &\\
       & & & & \frac{1}{M-1} & &\\
     & & & & & \frac{1}{M} &\\
  & & & & & & \frac{1}{M+1}\\
\end{bmatrix}}\limits^{\boldsymbol{R}:(M+2)\times(M+2)}
\mathop{\renewcommand\arraystretch{1.5}
\begin{bmatrix}
2  &-\frac{1}{2} & s_2 & s_3 &  \cdots  & s_{M}  \\
2  & 0     & -1   & 0       & \cdots   & 0  \\
0   & 1    & 0     &-1       & \cdots   & 0   \\
\vdots &\ddots &\ddots &\ddots &\ddots  & \vdots  \\
0 &  0       & \ddots      & 1   & 0 & -1 \\
0 &  0       & 0     & \ddots   & 1  & 0    \\
0 &  0       & 0     & \cdots   & 0  & 1   \\
\end{bmatrix}}\limits^{\boldsymbol{S}:(M+2)\times(M+1)}
\mathop{\renewcommand\arraystretch{1.5}
\begin{bmatrix}
\hat{f}_{0}^{p-1}\\ \hat{f}_{1}^{p-1}\\f_{2}^{p-1}
\\ \hat{f}_{3}^{p-1}\\ \vdots \\ \hat{f}_{M}^{p-1}
\end{bmatrix}}\\
=\;&\boldsymbol{U_0}+\frac{\Delta T}{4}\boldsymbol{RS\hat{f}^p},
\end{split}\end{equation}
\end{small}
where $\boldsymbol{R}$ and $\boldsymbol{S}$ are the coefficient matrices and
the first line of $\boldsymbol{S}$ satisfies
$$s_m=\;(-1)^{m}\left(\frac{1}{m+1}-\frac{1}{m-1}\right),\quad m=2,3,\cdots,M.$$
$\boldsymbol{U_0}$ and $\boldsymbol{\hat{f}^p}$ can be defined by
\begin{equation}\begin{split}
&\boldsymbol{U_0}=\;[u_{T_n},0,0,\cdots,0]^{\top}_{_{(M+2)\times 1}},\\
&\boldsymbol{\hat{f}^p}=\;[\hat{f}_{0}^{p-1}, \hat{f}_{1}^{p-1},\cdots,\hat{f}_{M}^{p-1}]^{\top}_{_{(M+1)\times 1}}.
\end{split}\end{equation}
For simplicity, we set the values of the function $f(t,u^{p-1}(t))$ on the Chebyshev-Gauss points by the notation
$f_{m}^{p-1}=f\left(t_m,u_{p-1}^M(t_m)\right)$, $m=0,1,\cdots,M$,
then \eqref{fk} can be expressed as
\begin{equation}\label{mar_f}
\begin{split}
\boldsymbol{\hat{f}^{p-1}}=&\;
\mathop{\renewcommand\arraystretch{1.5}
\begin{bmatrix}
\frac{1}{M+1}(f_{0}^{p-1}T_0(\tau_0)+f_{1}^{p-1}T_0(\tau_1)+\cdots+f_{M}^{p-1}T_0(\tau_M))\\
\frac{2}{M+1}(f_{0}^{p-1}T_1(\tau_0)+f_{1}^{p-1}T_1(\tau_1)+\cdots+f_{M}^{p-1}T_1(\tau_M))\\
\vdots\\
\frac{2}{M+1}(f_{0}^{p-1}T_M(\tau_0)+f_{1}^{p-1}T_M(\tau_1)+\cdots+f_{M}^{p-1}T_M(\tau_M))\\
\end{bmatrix}}\\
=&\;\mathop{\renewcommand\arraystretch{1.5}
\begin{bmatrix}
\frac{1}{M+1}  & & &\\
 & \frac{2}{M+1}& & \\
 & &\ddots & \\
 & & & \frac{2}{M+1}\\
\end{bmatrix}}\limits^{\boldsymbol{V}:(M+1)\times(M+1)}
\mathop{\renewcommand\arraystretch{1.5}
\begin{bmatrix}
T_0(\tau_0)  &T_0(\tau_1) & \cdots & T_0(\tau_M)  \\
T_1(\tau_0)  &T_1(\tau_1) & \cdots & T_1(\tau_M)  \\
\vdots  &\vdots & \ddots  &\vdots   \\
T_M(\tau_0)  &T_M(\tau_1) & \cdots & T_{M}(\tau_M)  \\
\end{bmatrix}}\limits^{\boldsymbol{T_2}:(M+1)\times(M+1)}
\mathop{\renewcommand\arraystretch{1.5}
\begin{bmatrix}
f_{0}^{p-1}\\ f_{1}^{p-1}\\ \vdots \\ f_{M}^{p-1}
\end{bmatrix}}\\
=&\;\boldsymbol{VT_2f^{p-1}}.
    \end{split}
\end{equation}
where $\boldsymbol{V}$ and $\boldsymbol{T_2}$ are coefficient matrices
and the vector $\boldsymbol{f^{p-1}}$ is defined by
\begin{equation}\begin{split}
\boldsymbol{f^{p-1}}=\;[f(t_0,u^{p-1}_M(t_0)),
f(t_1,u^{p-1}_M(t_1)),\cdots,f(t_M,u^{p-1}_M(t_M))]^{\top}_{_{(M+1)\times 1}}.
\end{split}\end{equation}
To this end, given the initial guess $\boldsymbol{u_0}$,
the $p$-th matrix-vector iteration version of the Chebyshev-Gauss spectral collocation method
for solving \eqref{ODETn} is as follows.
\begin{equation}\label{sch_mv}\begin{split}
&\boldsymbol{C_{\alpha}}=\;\frac{1}{4}\boldsymbol{RSVT_2},\\
&\boldsymbol{\hat{u}^p} =\; \boldsymbol{U_0}-\Delta T\boldsymbol{C_{\alpha}f^{p-1}},\\
&\boldsymbol{u^{p}}=\;\boldsymbol{T_1\hat{u}^p}.
\end{split}\end{equation}
We develop the matrix-vector form of the method in order to acquire the stability function on the one hand,
and to considerably improve computing efficiency so that it may be employed in our numerical experiments on the other.
The matrix-vector form of the method improves the efficiency of computation sufficiently.
Following, we establish the linear stability function for the Chebyshev-Gauss spectral collocation method,
for which we initially derive the special form of the method for solving the linear equation,
\begin{equation}\label{Uk1}\begin{split}
\boldsymbol{u^{p}}=\;\boldsymbol{T_1\hat{u}^p}
=&\;\boldsymbol{T_1}(\boldsymbol{U_0}-\Delta T\boldsymbol{C_{\alpha}f^{p-1}})\\
=&\;\boldsymbol{T_1U_0}-\lambda\Delta T\boldsymbol{T_1C_\alpha \boldsymbol{u^{p-1}}}.\\
\end{split}\end{equation}
By assuming $\boldsymbol{u}^{*}=[u^*(t_0),u^*(t_1),\cdots,u^*(t_M)]^{\top}$
be the convergence solution after a sufficient number of iteration,
we may rewrite \eqref{Uk1} for given $z:=\lambda \Delta T$ as
\begin{equation}\label{U*}\begin{split}
\boldsymbol{u^{*}}
=&\;\boldsymbol{T_1U_0}-z\boldsymbol{T_1C_\alpha \boldsymbol{u^{*}}},\\
=&\;(\boldsymbol{I_1}+z\boldsymbol{T_1C_\alpha})^{-1}\boldsymbol{T_1U_0},
\end{split}\end{equation}
where $\boldsymbol{I_1}$ is an identity matrix of size ($M+1$),
then we can derive the corresponding coefficient vector $\boldsymbol{\hat{u}}^{*}=[u^*_1,u^*_2,\cdots,u^*_{M+1}]^{\top}$
directly  by \eqref{sch_mv}
\begin{equation}\begin{split}
\boldsymbol{\hat{u}^*}=\;\boldsymbol{U_0}-\Delta T\boldsymbol{C_{\alpha}}(\lambda \boldsymbol{u}^*)
=\boldsymbol{U_0}-z\boldsymbol{C_{\alpha}}(\boldsymbol{I_1}+z\boldsymbol{T_1C_\alpha})^{-1}\boldsymbol{T_1U_0}.
\end{split}\end{equation}
Finally, the stability function of Chebyshev-Gauss spectral collocation method $\mathcal{R}_{\rm CG}(z,M)$
could be acquired by the compression relations
\begin{equation}\label{R_CG}
\begin{split}
\mathcal{R}_{\rm CG}(z,M)=\frac{u^*(T_{n+1})}{u_{T_n}}=\frac{\boldsymbol{T}\boldsymbol{\hat{u}^*}}{u_{T_n}}
=\boldsymbol{T}(\boldsymbol{I_2}-z\boldsymbol{C_\alpha}(\boldsymbol{I_1}+z\boldsymbol{T_1C_{\alpha}})^{-1}\boldsymbol{T_1})\boldsymbol{E},
\end{split}
\end{equation}
where $\boldsymbol{I_2}$ is an $(M+2)$ dimensional identity matrix, $\boldsymbol{T}$ and $\boldsymbol{E_1}$
are two vectors defined by $\boldsymbol{T}=[1,\cdots,1,1]_{_{1\times(M+2)}}$ and
$\boldsymbol{E}=[1,0,\cdots,0]_{_{(M+2)\times1}}^{\top}$.
\end{proof}

We can obtain the stability functions $\mathcal{R}_{\rm CG}$,
particularly when $M=0$ and $M=1$.
\begin{equation}
\mathcal{R}_{\rm CG}(z,0)=\frac{2-z}{2+z},\quad
\mathcal{R}_{\rm CG}(z,1)=\frac{z^2-8z+16}{z^2+8z+16}.
\end{equation}
For $M=0, 1, 2, 4, 20$, we displayed $\left|\mathcal{R}_{\rm CG}(z,M)\right|$
as the functions of $z_{\max}:=\triangle T\lambda_{\max}$ in Figure \ref{fig:R_CG},
which shows that for various $M$ we have
\begin{equation}\begin{split}
\lim_{z\rightarrow\infty}\left|\mathcal{R}_{\rm CG}(z)\right|=\; 1.
\end{split}\end{equation}
\begin{figure}
\centerline{\includegraphics[width=0.5\textwidth]{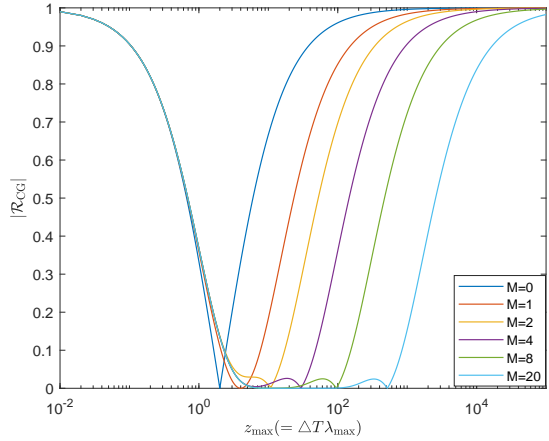}}
\caption{The contraction factor $|\mathcal{R}_{\rm CG}|$ as a function of $z_{\max}$, for different $M$.}
\label{fig:R_CG}
\end{figure}

As a result, we can obtain the contraction factor of Parareal-CG method and derive the convergence analysis.
\subsection{Convergence analysis of Parareal-CG method}
The following conclusion regarding the convergence of Parareal-CG Algorithm
on sufficiently long time intervals 
can be deduced directly from the analysis provided by Gander and Vandewalle \cite{GV2007}
for the linear system of ODEs \eqref{SPD} with $A\in\mathbb{R}^{m\times m}$ and symmetric positive definite matrix
(which therefore can be diagonalized and all the eigenvalues are positive real numbers).
\begin{thm}[\cite{GV2007}]\label{Para-CGthm}
Let $\mathcal{F}_{\rm CG}$ be the Chebyshev-Gauss spectral collocation propagator
with the stability function $\mathcal{R}_{\rm CG}(z)$ \eqref{R_CG},
and let $\sigma(A) = \{\lambda_1,\cdots, \lambda_m\}$ be the set of eigenvalues of the matrix $A$ in \eqref{SPD}.
Then, the errors $\{e^k_n\}$ of the parareal-CG algorithm at $k$-th iteration using the
backward-Euler method as the coarse propagator satisfy
\begin{equation}\label{rho}
\sup_n\|Ve^k_n\|_{\infty}\leq\rho^k\sup_n\|Ve^k_0\|_{\infty},\quad \rho=\max_{\lambda\in\sigma(A)}\mathcal{K}(\triangle T\lambda,M),
\end{equation}
where $\rho$ is the convergence factor of the parareal algorithm, $k \geq 1$ is the iteration index,
and $V\in \mathbb{R}^{m\times m}$ consists of the eigenvectors of $A$ $(i.e., V^{-1}AV = diag(\lambda_1,\cdots,\lambda_m))$.
The argument $\mathcal{K}_{\rm CG}$, which is the convergence factor corresponding to a single eigenvalue
(or in short "contraction factor" hereafter), is defined by
\begin{equation}\label{K_CG}
\mathcal{K}_{\rm CG}(z,J)=\frac{\left|\mathcal{R}_{\rm CG}(z,M)-\frac{1}{1+z}\right|}{1-\left|\frac{1}{1+z}\right|}.
\end{equation}
\end{thm}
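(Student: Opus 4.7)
The plan is to reduce the vector-valued Parareal-CG iteration to a family of decoupled scalar iterations, one for each eigenvalue of $A$, and then apply the standard generating-function estimate of Gander--Vandewalle to each scalar recurrence. Since $A$ is SPD, there is an invertible $V$ with $V^{-1}AV = \Lambda = \diag(\lambda_1,\ldots,\lambda_m)$ and all $\lambda_j > 0$. Setting $\tilde e^{\,k}_n := V^{-1} e^{\,k}_n$, both the backward-Euler coarse propagator $\mathcal{G}$ and the Chebyshev-Gauss fine propagator $\mathcal{F}_{\rm CG}$ commute with the change of variables on the linear homogeneous part, so the Parareal update
\[
e^{k+1}_{n+1} = \mathcal{G}(T_n,e^{k+1}_n,\Delta T) + \bigl(\mathcal{F}_{\rm CG}-\mathcal{G}\bigr)(T_n,e^{k}_n,M,\Delta T)
\]
decouples into $m$ scalar recurrences, one for each $\lambda_j$.

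For each mode, the scalar test equation $u' = -\lambda u$ gives $\mathcal{G}(z) = 1/(1+z)$ (backward Euler) and $\mathcal{F}_{\rm CG}(z,M) = \mathcal{R}_{\rm CG}(z,M)$ (the stability function derived in Lemma \ref{RMCPIlem}), where $z := \lambda\Delta T > 0$. Hence the $j$-th scalar component of the error obeys
\begin{equation*}
\tilde e^{\,k+1}_{j,n+1} = \tfrac{1}{1+z_j}\,\tilde e^{\,k+1}_{j,n} + \bigl(\mathcal{R}_{\rm CG}(z_j,M) - \tfrac{1}{1+z_j}\bigr)\,\tilde e^{\,k}_{j,n},
\qquad \tilde e^{\,k}_{j,0}=0.
\end{equation*}
This is precisely the linear two-level recurrence treated by Gander and Vandewalle \cite{GV2007}.

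Next I would invoke their generating-function argument: multiplying the recurrence by $\zeta^{n+1}$, summing over $n\ge 0$, and isolating the generating function of $\{\tilde e^{\,k+1}_{j,n}\}_n$ gives a product form whose sup-norm (over $n$) is bounded by the geometric series in $|\mathcal{G}(z_j)| = 1/(1+z_j) < 1$. This produces
\[
\sup_{n}|\tilde e^{\,k}_{j,n}| \;\le\; \left(\frac{\bigl|\mathcal{R}_{\rm CG}(z_j,M) - \tfrac{1}{1+z_j}\bigr|}{1 - \bigl|\tfrac{1}{1+z_j}\bigr|}\right)^{k}\sup_{n}|\tilde e^{\,0}_{j,n}|
= \mathcal{K}_{\rm CG}(z_j,M)^k\,\sup_{n}|\tilde e^{\,0}_{j,n}|,
\]
which is exactly \eqref{K_CG} applied to the $j$-th mode. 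Finally, taking $\|\cdot\|_\infty$ on both sides in the eigenbasis, using $\|V e^{\,k}_n\|_\infty = \|\tilde e^{\,k}_n\|_\infty$ (after absorbing $V$ into the change of variables) and maximizing over $\lambda_j\in\sigma(A)$ yields $\sup_n\|V e^{\,k}_n\|_\infty \le \rho^{\,k}\sup_n\|V e^{\,0}_n\|_\infty$ with $\rho = \max_{\lambda\in\sigma(A)}\mathcal{K}_{\rm CG}(\Delta T\lambda,M)$.

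The only genuine obstacle is verifying that $\mathcal{F}_{\rm CG}$ acts as scalar multiplication by $\mathcal{R}_{\rm CG}(z,M)$ after diagonalization. This requires checking that the matrices $\boldsymbol{T_1},\boldsymbol{C_\alpha},\boldsymbol{T},\boldsymbol{E}$ entering \eqref{RMCPI} act trivially with respect to the spatial variable, so that applying $\mathcal{F}_{\rm CG}$ to the vector ODE $u' = -Au$ and then changing to the eigenbasis is the same as applying the scalar $\mathcal{F}_{\rm CG}$ to each decoupled mode; this follows because $\mathcal{F}_{\rm CG}$ is built from the identity and $A$ only, and all operations commute with $V^{-1}(\cdot)V$. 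Once this commutation is in place, the rest of the argument is the verbatim Gander--Vandewalle estimate and no new analysis is needed.
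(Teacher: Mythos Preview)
The paper does not supply its own proof of this theorem: it is stated with the citation \cite{GV2007} and introduced as a direct consequence of the Gander--Vandewalle analysis, so there is no in-paper argument to compare against. Your proposal is precisely the standard Gander--Vandewalle derivation (diagonalize $A$, reduce to decoupled scalar Dahlquist recurrences, apply the generating-function/geometric-series bound mode by mode, then maximize over $\sigma(A)$), which is the intended content of the citation. In that sense your approach matches what the paper relies on.

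One small point worth tightening: you set $\tilde e^{\,k}_n := V^{-1} e^{\,k}_n$ and then assert $\|V e^{\,k}_n\|_\infty = \|\tilde e^{\,k}_n\|_\infty$. As written this identification is off; $V\tilde e^{\,k}_n = e^{\,k}_n$, not $V e^{\,k}_n$. The theorem's appearance of $Ve^{\,k}_n$ rather than $V^{-1}e^{\,k}_n$ is already somewhat unusual (and the right-hand side $e^{k}_0$ in \eqref{rho} should almost certainly read $e^{0}_n$), so you should simply state the estimate in the eigenbasis coordinates $\tilde e^{\,k}_n$ and note that this is what \eqref{rho} is meant to express. This is a notational cleanup, not a gap in the argument.
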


The convergence theorem \ref{Para-CGthm} states that the behavior of the Parareal-CG algorithm
over a long time interval is determined by the convergence factor
$\rho(M)$ defined by $\rho(M):=\max_{z\in[0,z_{\max}]}\mathcal{K}(z,M)$
where $z_{\max} = \lambda_{\max}\Delta T$ and
the quantity $\lambda_{\max}$ denotes the maximal eigenvalue (or an upper bound)
of the coefficient matrix $A$ in \eqref{SPD}.
We can infer from the equation \eqref{rho} that the smaller $\rho(M)$ is, the faster the algorithm converges.
To maintain the convergence rate, the convergence factor $\rho(M)$ prefers to be around $\frac{1}{3}$.
\begin{figure}
\centerline{\includegraphics[width=0.5\textwidth]{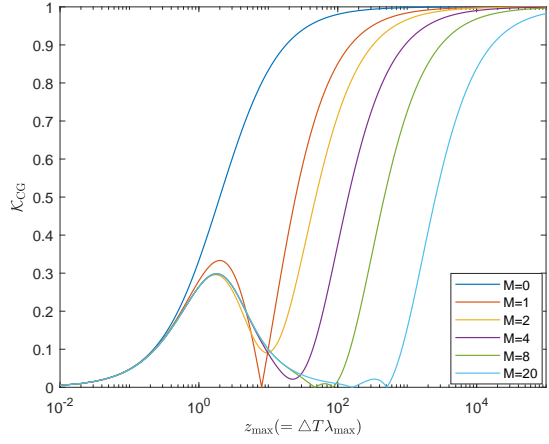}}
\caption{The contraction factor $\mathcal{K}_{\rm CG}$ as a function of $z_{\max}$, for different $M$.}
\label{fig:Kap}
\end{figure}

The convergence factor of Parareal-CG algorithm
$\mathcal{K}_{\rm CG}(z,M)$ as functions of
$z_{\max}:=\triangle T\lambda_{\max}$ for various $M$ are shown in Figure \ref{fig:Kap}.
We may infer from the behavior of the $\mathcal{K}_{\rm CG}(z,M)$
which is similar to the convergence factor of Parareal-TR and Parareal-Gauss 4 algorithms
discussed in \cite{WZ2015} that
\begin{equation}\begin{split}
\lim_{z\rightarrow 0}\mathcal{K}_{\rm CG}(z)=\; 0,\quad\quad
\lim_{z\rightarrow\infty}\mathcal{K}_{\rm CG}(z)=\; 1.
\end{split}\end{equation}
It implies that the Parareal-CG algorithm cannot keep the convergence rate for arbitrarily large $z$,
however, by solving an appropriately designed $M$ that is large enough,
it is still possible to get a feasible parareal solver.
In further detail, as $M$ grows, the convergence factor $\rho(M):=\max_{z\in[0,z_{max}]}\mathcal{K}_{\rm CG}(z,M)$
tends to be around $\frac{1}{3}$ for some given $z$.

The Convergence analysis can be derived by the convergence theorem \ref{Para-CGthm} of Parareal-CG algorithm
and the stability function \eqref{RMCPI} of Chebyshev-Gauss spectral collocation method.
\begin{thm}\label{thm_M}
Given a fixed constant $z_{\max}=\triangle T\lambda_{\max}>0$,
there exists some positive integer $M^*_{\min}$ such that.
\begin{equation}
\max_{z\in[0,z_{\max}]}\mathcal{K}_{\rm CG}(z,M)\leq \frac{1}{3},\quad  \text{if}\;\;\; M\geq M^*_{\min},
\end{equation}
where $\mathcal{K}_{\rm CG}$ is the contraction factors of the Parareal-CG algorithm defined by \eqref{K_CG}
with the stability functions of Chebyshev-Gauss spectral collocation method $\mathcal{R}_{\rm CG}$ given in \eqref{R_CG}.
\end{thm}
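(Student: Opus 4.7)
The plan is to reduce the claim to a limiting contraction factor obtained when the fine propagator is replaced by the exact flow, namely
\begin{equation*}
\mathcal{K}_{\infty}(z)\;:=\;\frac{\bigl|e^{-z}-\tfrac{1}{1+z}\bigr|}{1-\tfrac{1}{1+z}}\;=\;\frac{\bigl|1-(1+z)e^{-z}\bigr|}{z},\qquad z>0,
\end{equation*}
and then to pass from $\mathcal{K}_{\infty}$ to $\mathcal{K}_{\rm CG}$ by a uniform spectral-convergence estimate. Concretely, I would establish two ingredients: (i) $c^{*}:=\sup_{z>0}\mathcal{K}_{\infty}(z)$ is strictly less than $\tfrac{1}{3}$, and (ii) $\mathcal{R}_{\rm CG}(\cdot,M)\to e^{-\cdot}$ uniformly on $[0,z_{\max}]$ as $M\to\infty$. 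Together (i)--(ii) give $\mathcal{K}_{\rm CG}(\cdot,M)\to\mathcal{K}_{\infty}$ uniformly on $[0,z_{\max}]$, so choosing $M^{*}_{\min}$ large enough that $\sup_{z\in[0,z_{\max}]}|\mathcal{K}_{\rm CG}(z,M)-\mathcal{K}_{\infty}(z)|<\tfrac{1}{3}-c^{*}$ completes the proof.

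Step (i) is elementary calculus on $h(z):=1-(1+z)e^{-z}$: $h(0)=0$, $h'(z)=ze^{-z}>0$, $h(z)\sim z^{2}/2$ near $0$, and $h(z)\to 1$ as $z\to\infty$, so $\mathcal{K}_{\infty}=h/z$ is continuous on $[0,\infty)$ and vanishes at both endpoints, attaining its maximum at an interior critical point $z^{*}$ characterized by $e^{z^{*}}=1+z^{*}+(z^{*})^{2}$. At such a critical point the identity $e^{-z^{*}}=1/(1+z^{*}+(z^{*})^{2})$ collapses the numerator of $\mathcal{K}_{\infty}(z^{*})$ to the closed form $c^{*}=z^{*}e^{-z^{*}}$; numerically $z^{*}\approx 1.793$ and $c^{*}\approx 0.298<\tfrac{1}{3}$. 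Step (ii) is obtained by applying the scheme to the scalar linear test equation $u'+\lambda u=0$ on $[T_n,T_{n+1}]$ with $u(T_n)=1$: by construction \eqref{R_CG} the terminal value of the collocation solution equals $\mathcal{R}_{\rm CG}(z,M)$, while the exact solution $e^{-\lambda(t-T_n)}$ is analytic with $|u^{(r)}|\le\lambda^{r}$. Substituting these derivative bounds into the $hp$-estimate of Theorem \ref{theo_CGspec} and optimizing $r$ via Stirling yields super-algebraic (indeed exponential) decay of $\max_{z\in[0,z_{\max}]}|\mathcal{R}_{\rm CG}(z,M)-e^{-z}|$ in $M$; when $z_{\max}\ge\tfrac14$ violates the Lipschitz restriction $L(b-a)<\beta<\tfrac14$ of Theorem \ref{theo_CGspec}, a harmless sub-division of $[T_n,T_{n+1}]$ into finitely many pieces restores the hypothesis without spoiling the rate.

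The final continuity step uses that, on any interval $[\delta,z_{\max}]$ bounded away from $0$, the denominator $z/(1+z)$ in $\mathcal{K}_{\rm CG}$ is bounded below, so uniform convergence of the numerator lifts directly. On the residual interval $[0,\delta]$, both $\mathcal{R}_{\rm CG}(z,M)$ and $e^{-z}$ are first-order accurate approximations to the backward-Euler stability function $\tfrac{1}{1+z}=1-z+O(z^{2})$, so their differences with $\tfrac{1}{1+z}$ are $O(z^{2})$ with constants uniform in $M$ (they are controlled by the fixed first two Taylor coefficients of $\mathcal{R}_{\rm CG}$ at $z=0$, together with a uniform remainder from Step (ii)); dividing by $O(z)$ keeps both $\mathcal{K}_{\rm CG}(\cdot,M)$ and $\mathcal{K}_{\infty}$ Lipschitz on $[0,\delta]$ with uniform-in-$M$ constants, and their difference inherits the convergence rate of the numerator. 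The main obstacle is exactly this last uniform control near $z=0$ combined with the need to sidestep the Lipschitz restriction of Theorem \ref{theo_CGspec}; once these two technical points are handled as above, the rest of the argument is a routine $\varepsilon$-chase using the strict gap $\tfrac{1}{3}-c^{*}>0$.
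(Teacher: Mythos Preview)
Your approach is genuinely different from the paper's. The paper proceeds by explicit computation for $M=0$ and $M=1$, obtaining the closed forms $\mathcal{K}_{\rm CG}(z,0)=z/(2+z)$ and $\mathcal{K}_{\rm CG}(z,1)=|z^{2}-8z|/(4+z)^{2}$, from which the thresholds $z_{0}^{*}=1$ and $z_{1}^{*}=8+6\sqrt{2}$ are read off directly. For $M\ge 2$ the paper does not argue analytically: it \emph{infers from Figure~\ref{fig:Kap}} that $\mathcal{K}_{\rm CG}(\cdot,M)=\tfrac{1}{3}$ has a unique root $z^{*}(M)>z_{1}^{*}$ beyond which $\mathcal{K}_{\rm CG}$ is increasing, and then characterizes $M^{*}_{\min}$ implicitly through \eqref{root} without proving that such an $M$ exists. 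Your route---pass to the exact-flow contraction factor $\mathcal{K}_{\infty}$, use the strict bound $c^{*}\approx 0.298<\tfrac13$, and transfer back via uniform spectral convergence of $\mathcal{R}_{\rm CG}(\cdot,M)$ to $e^{-\cdot}$---is more analytic and actually supplies the existence of $M^{*}_{\min}$ that the paper leaves to numerical evidence. What the paper's approach buys is the explicit small-$M$ thresholds and the concrete recipe \eqref{root} for computing $M^{*}_{\min}$; what yours buys is an actual proof.

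There is, however, a real gap in your Step~(ii). The subdivision trick does not rescue Theorem~\ref{theo_CGspec} for large $z$: splitting $[T_{n},T_{n+1}]$ into $K$ pieces and applying the CG scheme on each produces the amplification factor $[\mathcal{R}_{\rm CG}(z/K,M)]^{K}$, not $\mathcal{R}_{\rm CG}(z,M)$, so convergence of the former to $e^{-z}$ says nothing about the object defined in \eqref{R_CG}. You cannot sidestep the hypothesis $L(b-a)<\tfrac14$ this way without silently replacing the single-interval stability function by a multistep one. The fix is to abandon Theorem~\ref{theo_CGspec} for this step and argue directly for the linear test problem: the CG scheme on $u'+\lambda u=0$ is a collocation (hence implicit Runge--Kutta) method whose stability function \eqref{RMCPI} is a rational approximant to $e^{-z}$ of degree $M{+}1$, and standard spectral/collocation approximation theory for analytic data gives $\mathcal{R}_{\rm CG}(z,M)\to e^{-z}$ uniformly on compact subsets of $[0,\infty)$ with no smallness restriction on $z$. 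Once that replacement is made, your uniform near-$z=0$ control and the final $\varepsilon$-chase against the gap $\tfrac13-c^{*}>0$ go through as written.
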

The Parareal-CG algorithm's lower bound, $M^*_{\min}$, is provided by
\begin{equation}
M^*_{\min}=\left\{\begin{aligned}
&0\quad \quad\quad \text{if}\;\; z\leq z_0^*, \\
&1\quad \quad\quad \text{if}\;\; z_0^*<z\leq z^*_1, \\
&M^*_{\rm CG} \quad\; \text{otherwise}.
\end{aligned}\right.
\end{equation}
where $M^*_{\rm CG}>1$ depends on $z_{\max}$ and is the minimum positive integer
which satisfies the following equation
\begin{equation}\label{root}
\left|\mathcal{R}_{\rm CG}(z_{\max},M)\right|\leq \frac{3+z_{\max}}{3(1+z_{\max})}.
\end{equation}
Moreover, the quantity $z_0^*$ is the unique positive root of $\mathcal{K}_{\rm CG}(z,0)=\frac{1}{3}$
and $z_1^*$ is the maximum positive root of $\mathcal{K}_{\rm CG}(z,1)=\frac{1}{3}$,
that is to say, $z_0^*=1$ and $z_1^*=8+6\sqrt{2}$.

\begin{proof}
When $M=0$, we can derive the $\mathcal{R}_{\rm CG}(z,0)$ and
$\mathcal{K}_{\rm CG}(z,0)$ by
\begin{equation}
\mathcal{R}_{\rm CG}=\frac{2-z}{2+z},\quad\quad \mathcal{K}_{\rm CG}(z,0)=\frac{z}{2+z},\quad (z>0).
\end{equation}
It is obvious that $\mathcal{K}_{\rm CG}(z,0)$ is an increasing function with respect to $z>0$
and $\mathcal{K}_{\rm CG}(z,0)=\frac{1}{3}$ has the unique root $z0^*=1$.
that is to say,
\begin{equation}
\max_{z\in[0,z_{\max}]}\mathcal{K}_{\rm CG}(z,0)\leq\left\{\begin{aligned}
&\frac{1}{3},\quad\quad\quad\quad z\in(0,z_0^*]\\
&\frac{z_{\max}}{2+z_{\max}}\quad\; z\in(z_0^*,z_{\max}].
\end{aligned}\right.
\end{equation}
When $M=1$, we can derive the $\mathcal{R}_{\rm CG}(z,1)$ and
$\mathcal{K}_{\rm CG}(z,1)$ by
\begin{equation}
\mathcal{R}_{\rm CG}=\frac{(4-z)^2}{(4+z)^2},\quad\quad \mathcal{K}_{\rm CG}(z,1)=\frac{|z^2-8z|}{(4+z)^2},\quad z>0.
\end{equation}
There are three roots $z_{11}^*=z_{12}^*=2$ and $z_{1}^*=8+6\sqrt{2}$ of
$\mathcal{K}_{\rm CG}(z,1)=\frac{1}{3}$, respectively.
$z_{11}^*=z_{12}^*=2$ is also the maximum value point of $\mathcal{K}_{\rm CG}(z,1)$.
Moreover, $\mathcal{K}_{\rm CG}(z,1)$ is an increasing function for $z\geq 8+6\sqrt{2}$.
In conclusion, we may obtain
\begin{equation}
\max_{z\in[0,z_{\max}]}\mathcal{K}_{\rm CG}(z,0)\leq\left\{\begin{aligned}
&\frac{1}{3},\quad\quad\quad\quad\quad\quad z\in(0,z_1^*]\\
&\frac{z_{\max}^2-8z_{\max}}{(4+z_{\max})^2}\quad\; z\in(z_1^*,z_{\max}].
\end{aligned}\right.
\end{equation}
When $M\geq 2$, we can infer from Figure \ref{fig:Kap} that there exists only one root depends on $M$
$z^*(M)>z_1^*$ of $\mathcal{K}_{\rm CG}(z,M)=\frac{1}{3}$ and
$\mathcal{K}_{\rm CG}(z,M)$ is an increasing function for $z\geq z^*(M)$,
then we have
\begin{equation}
\max_{z\in[0,z_{\max}]}\mathcal{K}_{\rm CG}(z,M)=\left\{\begin{aligned}
&\frac{1}{3},\quad\quad\quad\quad\quad\quad\; z\in(0,z^*(M)]\\
&\mathcal{K}_{\rm CG}(z_{\max},M)\quad\, z\in(z^*(M),z_{\max}].
\end{aligned}\right.
\end{equation}
Therefore, we can derive that Theorem \ref{thm_M} holds.
\end{proof}

We can infer from Theorem \ref{thm_M} that $M$ has a significant impact
on the convergence rates because the lower bound $M^*_{\min}$ increases as $z_{\max}=\triangle T\lambda_{\max}$ grows.
Using \emph{while} loops in \emph{Matlab}, we can derive $M^*_{\min}$ for a given $z_{\max}$
since $M^*_{\min}$ is a natural number.
\begin{figure}[!ht]
\centerline{
\includegraphics[width=0.45\textwidth]{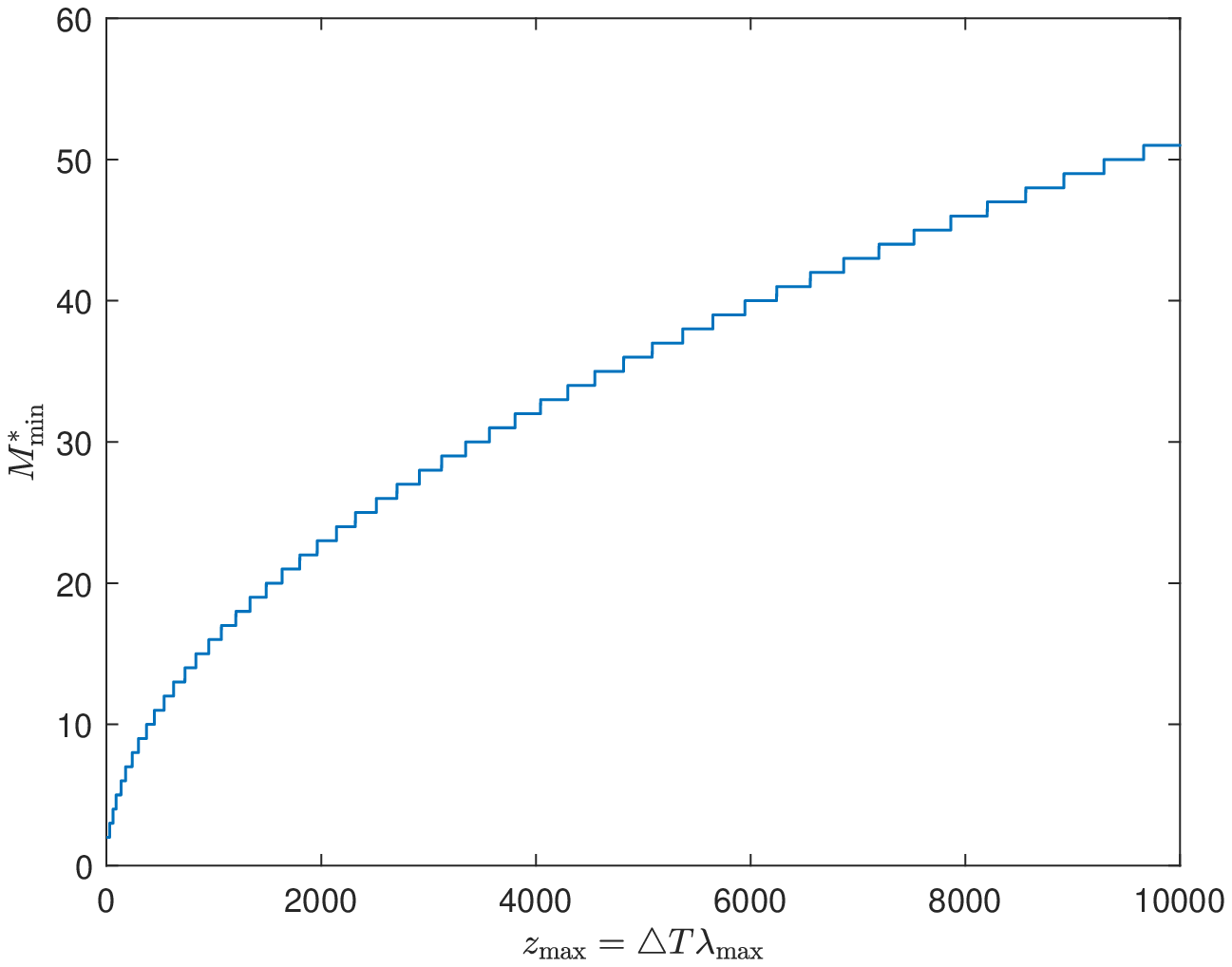}
\includegraphics[width=0.45\textwidth]{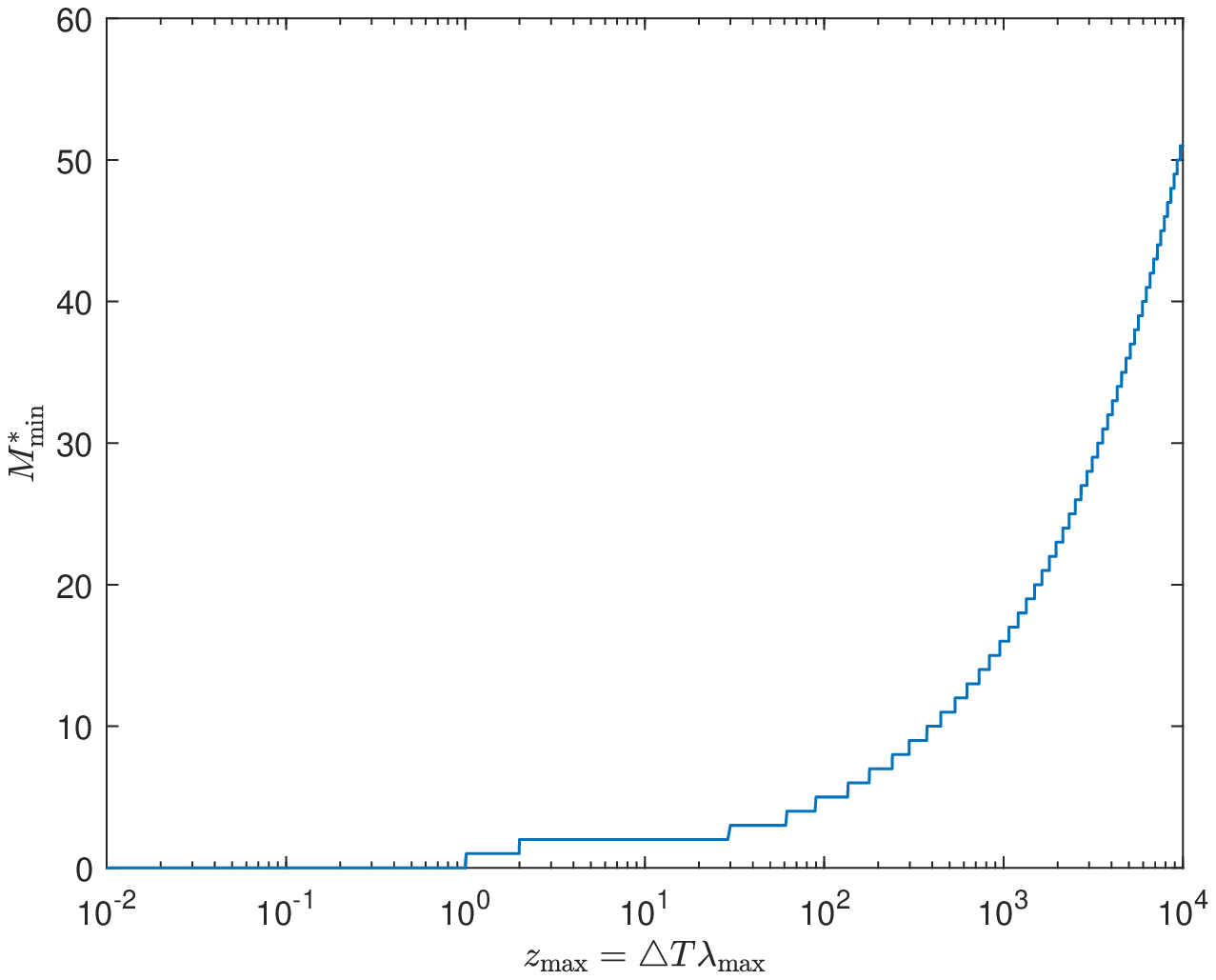}}
\caption{$M^*_{\min}$}
\label{M_CG}
\end{figure}
\begin{rem}[Comparison to other Parareal algorithms]\label{rem:com}
Researchers have proved that the following proposition for the Parareal-Euler \cite{MSS2010}
and Parareal-TR/BDF2 \cite{W2015} algorithms
utilizing the backward Euler and TR/BDF2 (i.e. ode23 solver for ODEs in MATLAB), respectively,
as the $\mathcal{F}$ propagators in the earlier papers.
\begin{equation}
\max_{z\in[0,z_{\max}]}\mathcal{K}_{\rm Euler,\;TR/BDF2}(z,J)<\frac{1}{3}, \quad \forall\, z_{\max}>0,
\end{equation}
which implies that for all $z_{\max}>0$, the convergence rates of Parareal algorithms $\rho(J)<\frac{1}{3}$.

Unfortunately, not all methods yield such a uniform result.
The following conclusion holds for the Parareal-TR and Parareal-Gauss4 algorithms \cite{WZ2015}
when utilizing the trapezoidal rule and fourth-order Gauss Runge-Kutta method as $\mathcal{F}$-propagator.
\begin{equation}
\max_{z\in[0,z_{\max}]}\mathcal{K}_{\rm TR,\; Gauss4}(z,J)<\frac{1}{3}, \quad \text{if $J$ is even and}\;\; J\geq J^*_{\min},
\end{equation}
It indicates that if the mesh ratio $J$ is large enough,
the two parareal algorithms will converge as fast as the Parareal-Euler method.
Our Parareal-CG method behaves similarly as Parareal-TR and Parareal-Gauss4 algorithms.

For the explicit $\mathcal{F}$-propagator forward Euler and
fourth-order explicit Runge-Kutta algorithms used in the Parareal-fEuler and Parareal-4ERK algorithms, respectively,
the convergence factor $\mathcal{K}$ begins to suddenly trend to infinity at some $z_{\max}$.
Therefore, these parareal algorithms can be used to solve a very limited number of equations.
\begin{equation}
\mathcal{K}_{\rm fEuler,\; 4ERK}(z,J)=\infty,\quad z>z^*.
\end{equation}
The convergence factors of the three kinds of parareal algorithms are shown in Figure \ref{Fig:com},

\begin{figure}[!ht]
\centerline{\includegraphics[width=0.4\textwidth]{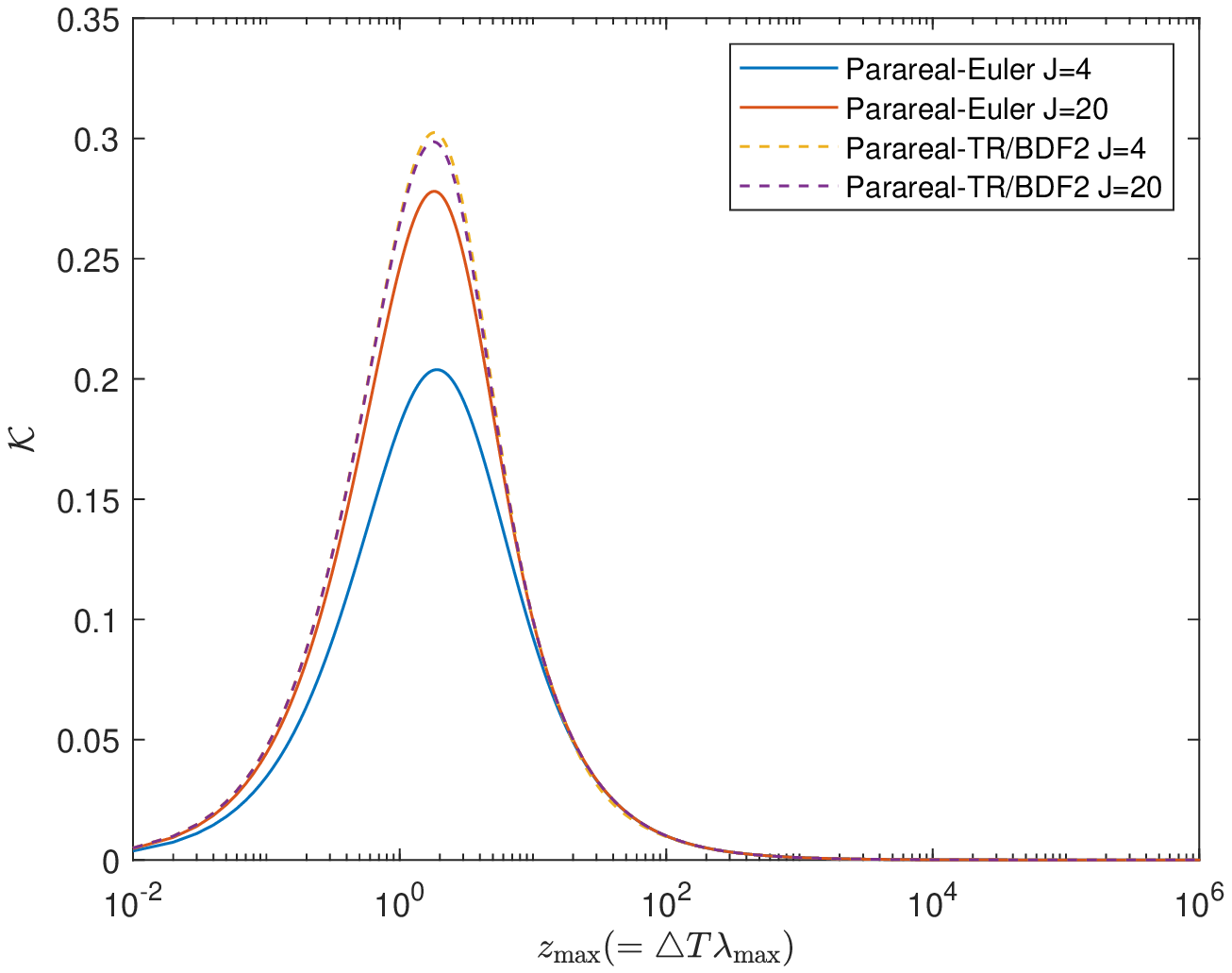}}
\centerline{\includegraphics[width=0.4\textwidth]{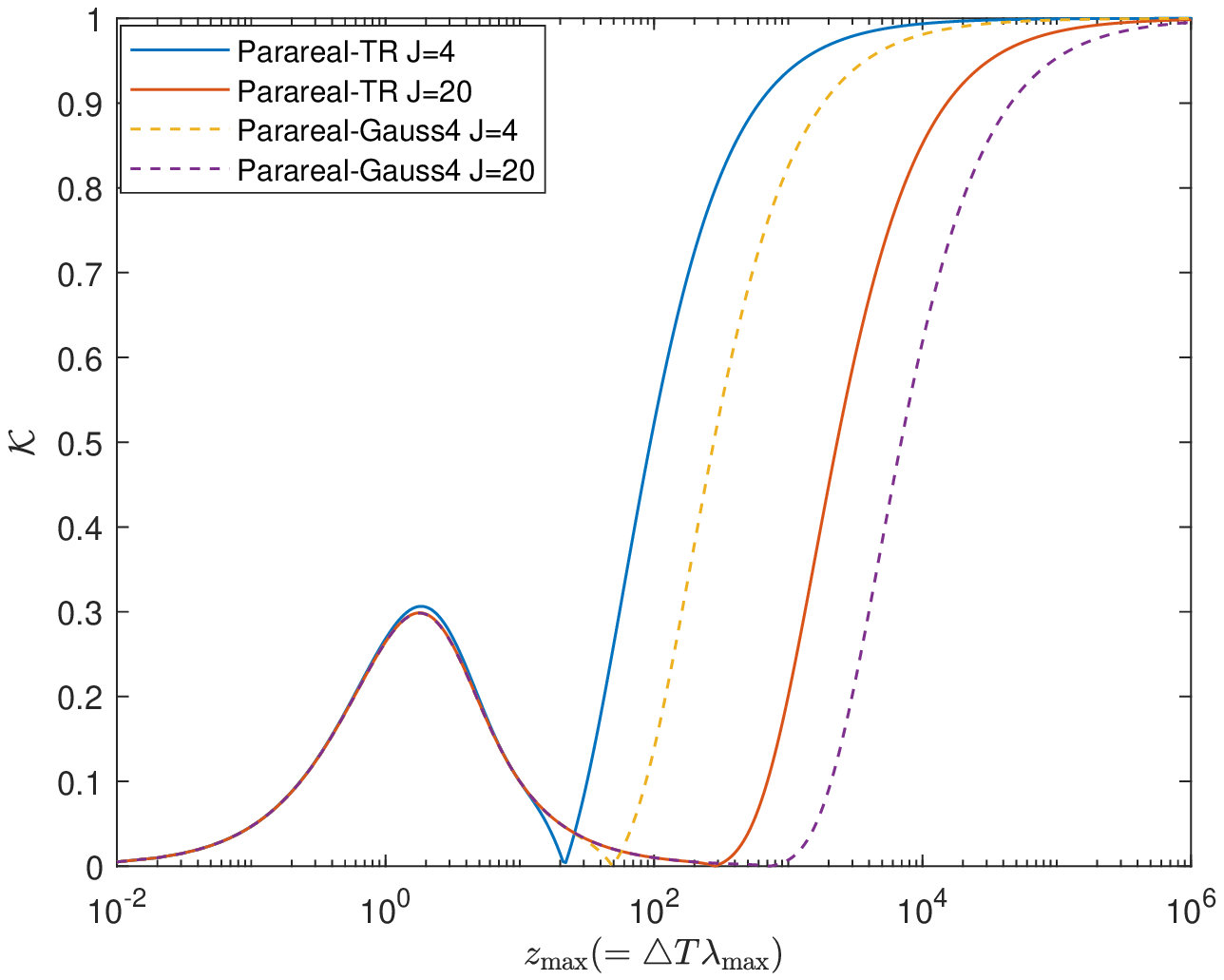}
\includegraphics[width=0.4\textwidth]{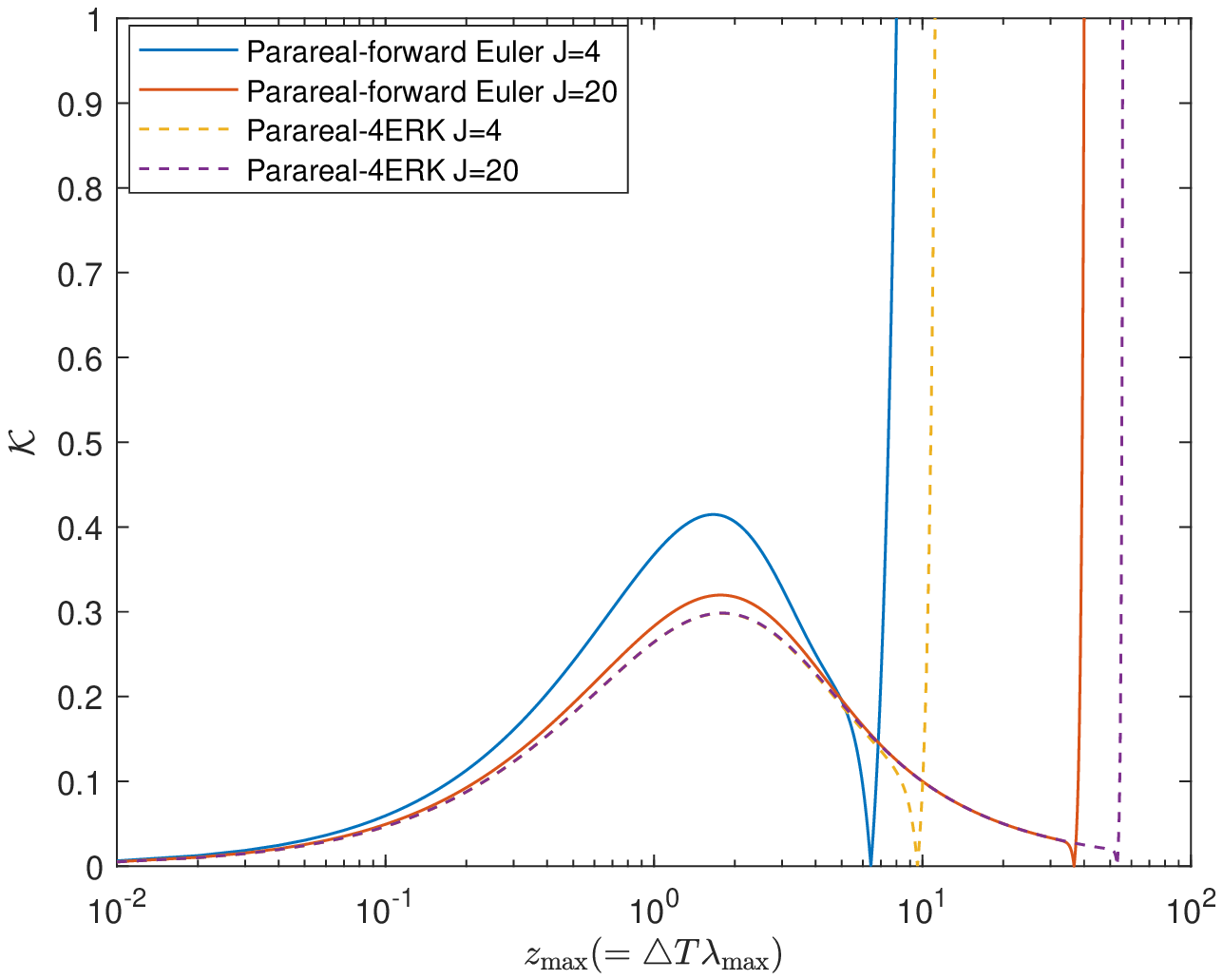}}
\caption{The contraction factors $\mathcal{K}$ as a functions of $z_{\max}$, for different $M$.
Top: Parareal-Euler ($\mathcal{F}$=backward Euler) and Parareal-TR/BDF2 ($\mathcal{F}$=TR/BDF2).
Bottom left side: Parareal-TR ($\mathcal{F}$=trapezoidal rule) and Parareal-Gauss4 ($\mathcal{F}$=forth-order Gauss).
Bottom right side: Parareal-fEuler ($\mathcal{F}$=forward Euler) and Parareal-4ERK ($\mathcal{F}$=forth-order explicit Runge-Kutta).}
\label{Fig:com}
\end{figure}
\end{rem}
\section{Numerical Experiment}\label{sec:num}
In this section we verify the convergence and efficiency of the Parareal-CG algorithm. 
In all the computations, the initial iteration for the parareal algorithm is chosen randomly
and the iteration process stops when the following tolerance is obtained:
\begin{equation}
\max_n\|u^{k+1}_n-u^k_n\|_{\infty}\leq 10^{-10}.
\end{equation}
Moreover, the absolute error and iteration error in all experiments are defined by
\begin{equation}\begin{split}
\text{Absolute error}:\max_n\|u^{k+1}_n-u(T_n)\|_{\infty}.\quad
\text{Iteration error}:\max_n\|u^{k+1}_n-u^k_n\|_{\infty}.
\end{split}\end{equation}
%
\begin{ex}[Near Earth satellite motion integration problem]
We take a two-body motion in low Earth orbit (LEO) with just mutual gravitational attraction for the first example.
The system of the second order three-dimensional equations is
\begin{equation}\label{2body}
\left\{\begin{split}
&x''(t)=\;-\frac{\mu}{r(t,x,y,z)^3}x(t),\quad t\in[0,50],\\
&y''(t)=\;-\frac{\mu}{r(t,x,y,z)^3}y(t),\quad t\in[0,50],\\
&z''(t)=\;-\frac{\mu}{r(t,x,y,z)^3}z(t),\quad t\in[0,50],
\end{split}\right.
\end{equation}
where $x$, $y$ and $z$ are the three coordinates in some Earth-centered inertial reference frame;
$r$ is the distance between the two bodies defined by
\begin{equation*}\begin{split}
r(t,x,y,z) =&\; \sqrt{x(t)^2+y(t)^2+z(t)^2};
\end{split}\end{equation*}
$\mu=3.986\times 10^5 km^3/s^2$ is the Earth gravitational constant.
We formulate the equations as a first-order system of six-dimensional differential equations,
We formulate the equations as a first-order system of six-dimensional differential equations,
where the solution is uniquely determined by the initial position and velocity
\begin{equation*}\begin{split}
[x(0), y(0),z(0)]&=\;[464.856, 6667.880, 574.231] km,\\
[x'(0), y'(0),z'(0)] &=\; [-2.8381188, -0.7871898, 7.0830275] km/s.
\end{split}\end{equation*}
We obtain the exact solution of the equation \eqref{2body} by
solving the two-body Keplerian motion using the $F$ and $G$ approach in \cite{SJ2003}.
\end{ex}

We compare the accuracy of Parareal-CG, Parareal-Euler, Parareal-TR/BDF2 and Parareal-Gauss4 algorithms
revisited in Remark \ref{rem:com} for solving the example.
In each coarse grid, we fix $\Delta T = 0.25$
with $M=6$ for Parareal-CG algorithm and $J=6$ for other parareal algorithms.
Then we perform the absolute errors and iteration errors of the parareal algorithms in Figure \ref{Fig:ex1}.
In this way, we can draw the following conclusions by the convergence behaviors in the figures.
\begin{itemize}
\item In comparison to the Parareal-Euler and Parareal-TR, the Parareal-CG algorithm utilizes less iterations.
While the Parareal-Gauss4 algorithm uses the same number of iterations as Parareal-CG algorithm.
\item Using the same number of points $M=J=6$ in every coarse grid,
   the Parareal-CG algorithm has a higher accuracy than the three others.
\end{itemize}
\begin{figure}
\centerline{\includegraphics[width=0.45\textwidth]{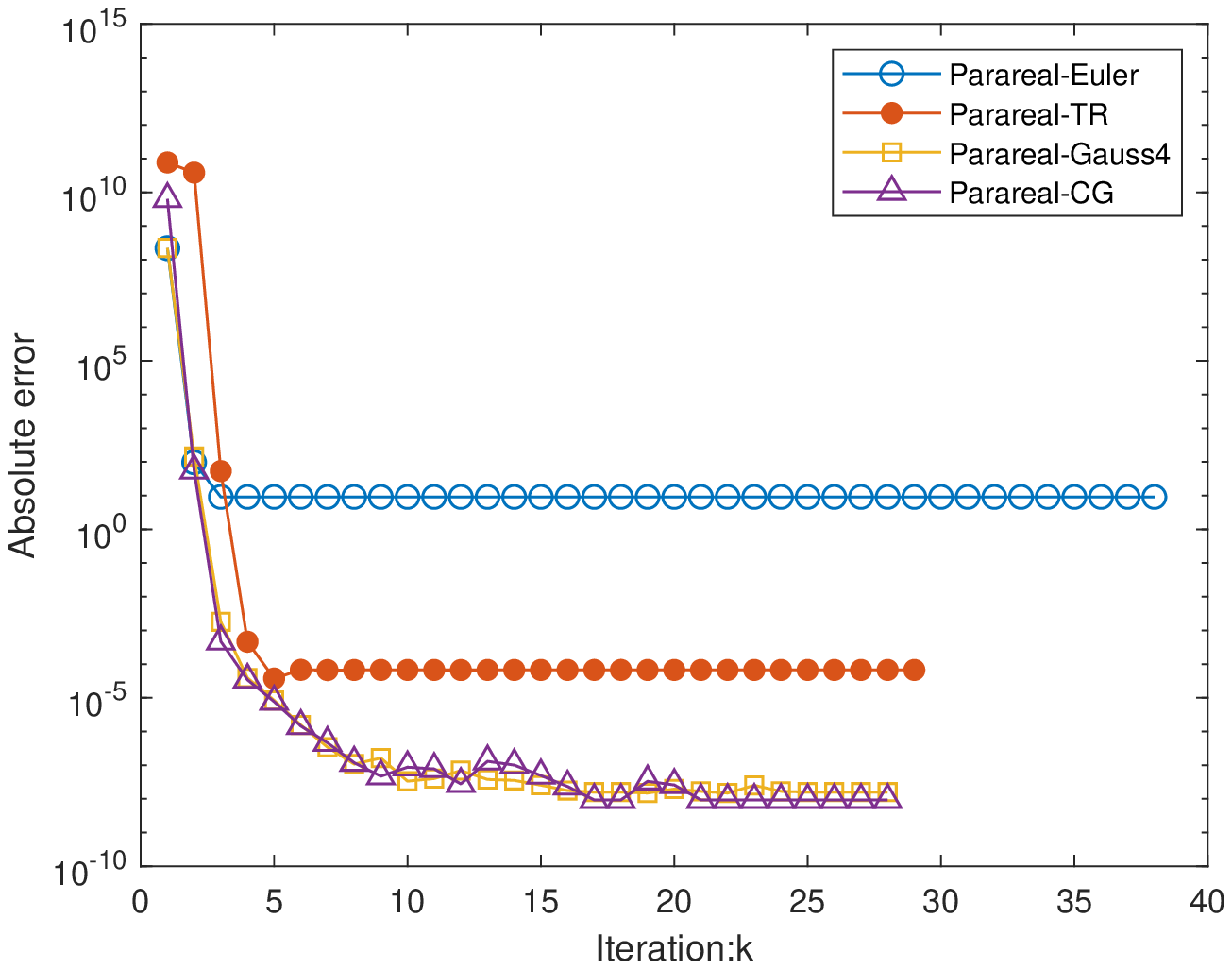}
\includegraphics[width=0.45\textwidth]{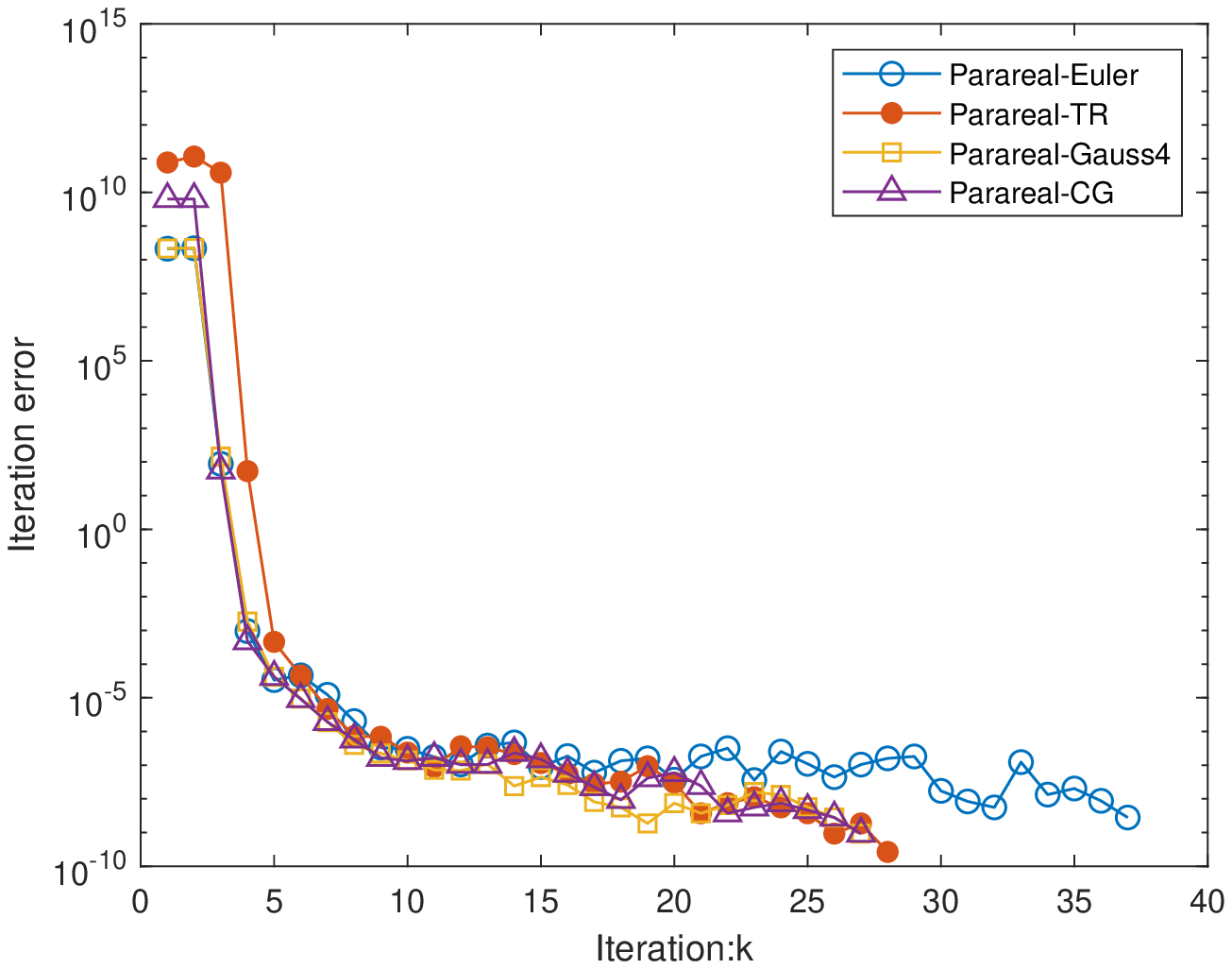}}
\caption{Comparisons of the absolute errors (left) and iteration errors (right)
for the Parareal-Euler, Parareal-TR and Parareal-Gauss4 and Parareal-CG algorithms.}
\label{Fig:ex1}
\end{figure}


\begin{ex}[Burgers' equation]\label{ex:Bur}
Consider the 1D Burgers' equation with initial and boundary condition as following
\begin{equation}\label{ex2:eq}
\left\{\begin{aligned}
&\;\frac{\partial u}{\partial t}-\nu \frac{\partial^2 u}{\partial x^2}+u\frac{\partial u}{\partial x}=0,
\quad\;\; (x,t)\in [0,2)\times[0,4],\\
&\;u(x,0)=\frac{2\nu\pi\sin(\pi x)}{\alpha+\cos(\pi x)}, \quad\quad\; x\in[0,2),\\
&\;u(0,t)=u(2,t)=0,\quad \quad \;\;\;\;\; t\in[0,4],
\end{aligned}\right.
\end{equation}
in our computations, we choose $\alpha=2$ and test $\nu=0.05,0.005$.
The exact solution of the equation is
\begin{equation}
u(x,t)=\frac{2\nu\pi\exp(-\pi^2\nu t)\sin(\pi x)}{\alpha+\exp(-\pi^2\nu t)\cos(\pi x)}.
\end{equation}
\end{ex}
We divide the spatial domain $x\in[0,2)$ into an $N_x$ mesh uniformly with $\Delta x = \frac{2}{N_x}$,
in this way, we have $x_j=j\Delta x$, $j=0,1,\cdots,N_x-1$.
Applying the fourth-order compact finite difference scheme
to approximate $\frac{\partial u}{\partial x}$ and $\frac{\partial^2 u}{\partial x^2}$, we have
\begin{small}\begin{equation}\begin{split}
&\frac{1}{6}\frac{\partial u}{\partial x}(x_{j+2},t)+\frac{2}{3}\frac{\partial u}{\partial x}(x_{j+1},t)
+\frac{1}{6}\frac{\partial u}{\partial x}(x_{j},t)=\;\frac{1}{2\Delta x}\left(u(x_{j+2},t)-u(x_{j},t)\right),\\
&\frac{1}{12}\frac{\partial^2 u}{\partial x^2}(x_{j+2},t)+\frac{5}{6}\frac{\partial^2 u}{\partial x^2}(x_{j+1},t)
+\frac{1}{12}\frac{\partial^2 u}{\partial x^2}(x_{j},t)=\;\frac{1}{\Delta x^2}\left(u(x_{j+2},t)-2u(x_{j+1},t)+u(x_{j},t)\right).\\
\end{split}\end{equation}\end{small}
Define the vectors
\begin{equation}\begin{split}
\mathbf{\dot{u}}=\;&\left[\frac{\partial u(x_{0},t)}{\partial t},
\frac{\partial u(x_{1},t)}{\partial t},\cdots, \frac{\partial u(x_{N_x-1},t)}{\partial t}\right]^{\top},\\
\mathbf{u}=\;&\left[u(x_{0},t), u(x_{1},t),\cdots, u(x_{N_x-1},t)\right]^{\top}.
\end{split}\end{equation}
Then combining the compact finite difference scheme with the period boundary condition, the Burgers' equation \eqref{ex2:eq}
has the following spatial semidiscrete scheme
\begin{equation}
\mathbf{\dot{u}}+A_1\mathbf{u}+\mathbf{u}\cdot(A_2\mathbf{u})=0,
\end{equation}
where the coefficient matrices $A_1$ and $A_2$ are
\begin{equation*}\begin{aligned}
&A_1 =\;\mathop{\renewcommand\arraystretch{1.5}
-\frac{\nu}{\triangle x^2}\;
\begin{bmatrix}
\frac{5}{6} &\frac{1}{12} & & & \frac{1}{12}\\
\frac{1}{12} & \frac{5}{6} & \frac{1}{12} & & \\
& \ddots & \ddots & \ddots &\\
& & \frac{1}{12} & \frac{5}{6} & \frac{1}{12}\\
\frac{1}{12} & & & \frac{1}{12} &\frac{5}{6}
\end{bmatrix}^{-1}
\begin{bmatrix}
-2 &1 & & & 1\\
1 & -2 & 1 & & \\
& \ddots & \ddots & \ddots & \\
& & 1 & -2 & 1\\
1 & & & 1 &-2
\end{bmatrix},}\\
&\quad A_2 =\; \mathop{\renewcommand\arraystretch{1.5}
\frac{1}{2\triangle x}
\begin{bmatrix}
\frac{2}{3} &\frac{1}{6} & & & \frac{1}{6}\\
\frac{1}{6} & \frac{2}{3} & \frac{1}{6} & & \\
& \ddots & \ddots & \ddots &\\
& & \frac{1}{6} & \frac{2}{3} & \frac{1}{6}\\
\frac{1}{6} & & & \frac{1}{6} &\frac{2}{3}
\end{bmatrix}^{-1}
\begin{bmatrix}
0 &1 & & & -1\\
-1 & 0 & 1 & & \\
& \ddots & \ddots & \ddots & \\
& & -1 & 0 & 1\\
1 & & & -1 &0
\end{bmatrix}.}
\end{aligned}\end{equation*}
The three figures in Fig \ref{Fig:ex2} show
the dependence of the convergence rate of the Parareal-CG algorithm
on $\Delta T$, $\Delta x$, $M$, respectively.
\begin{figure}[!ht]
\centerline{\includegraphics[width=0.4\textwidth]{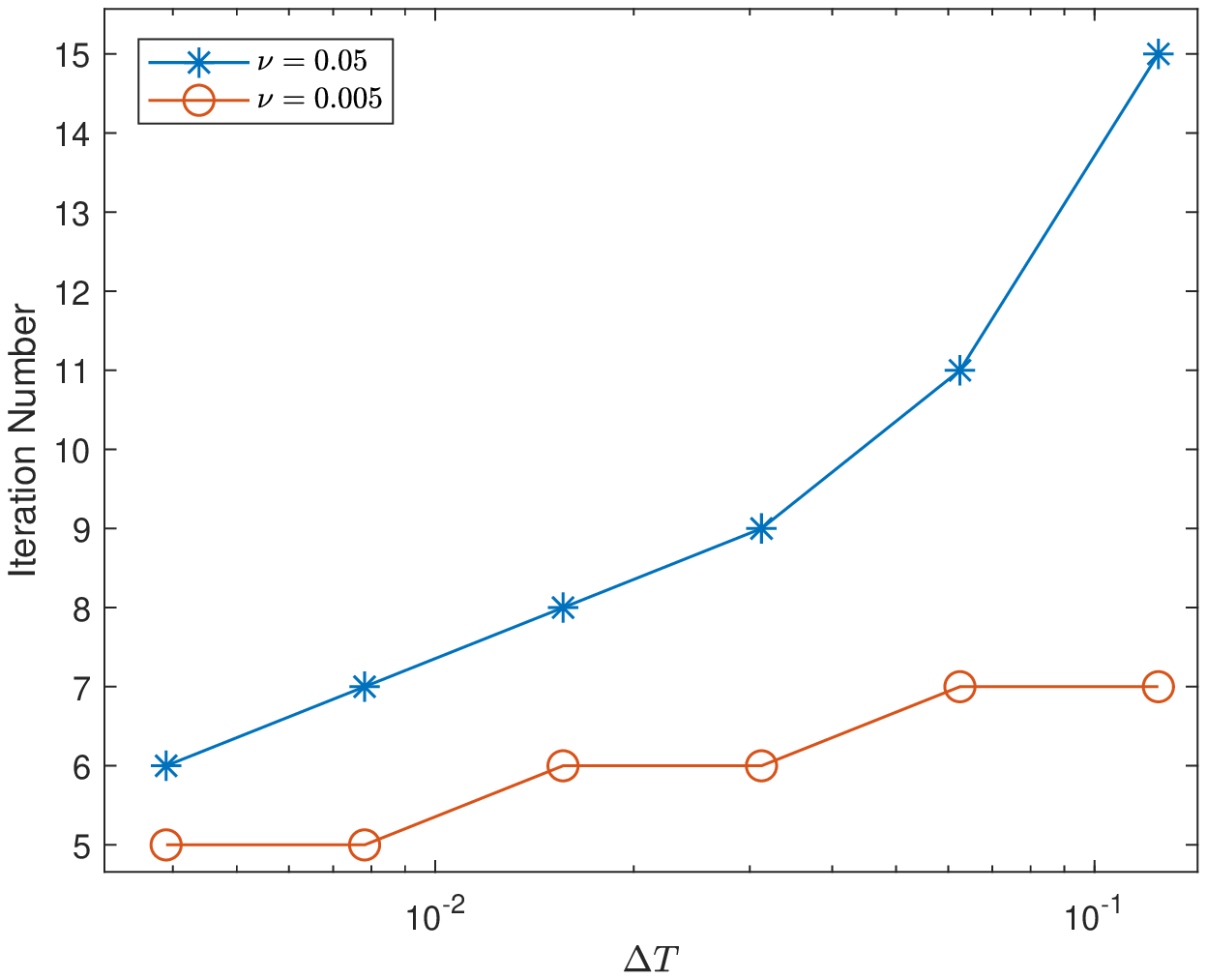}}
\centerline{\includegraphics[width=0.4\textwidth]{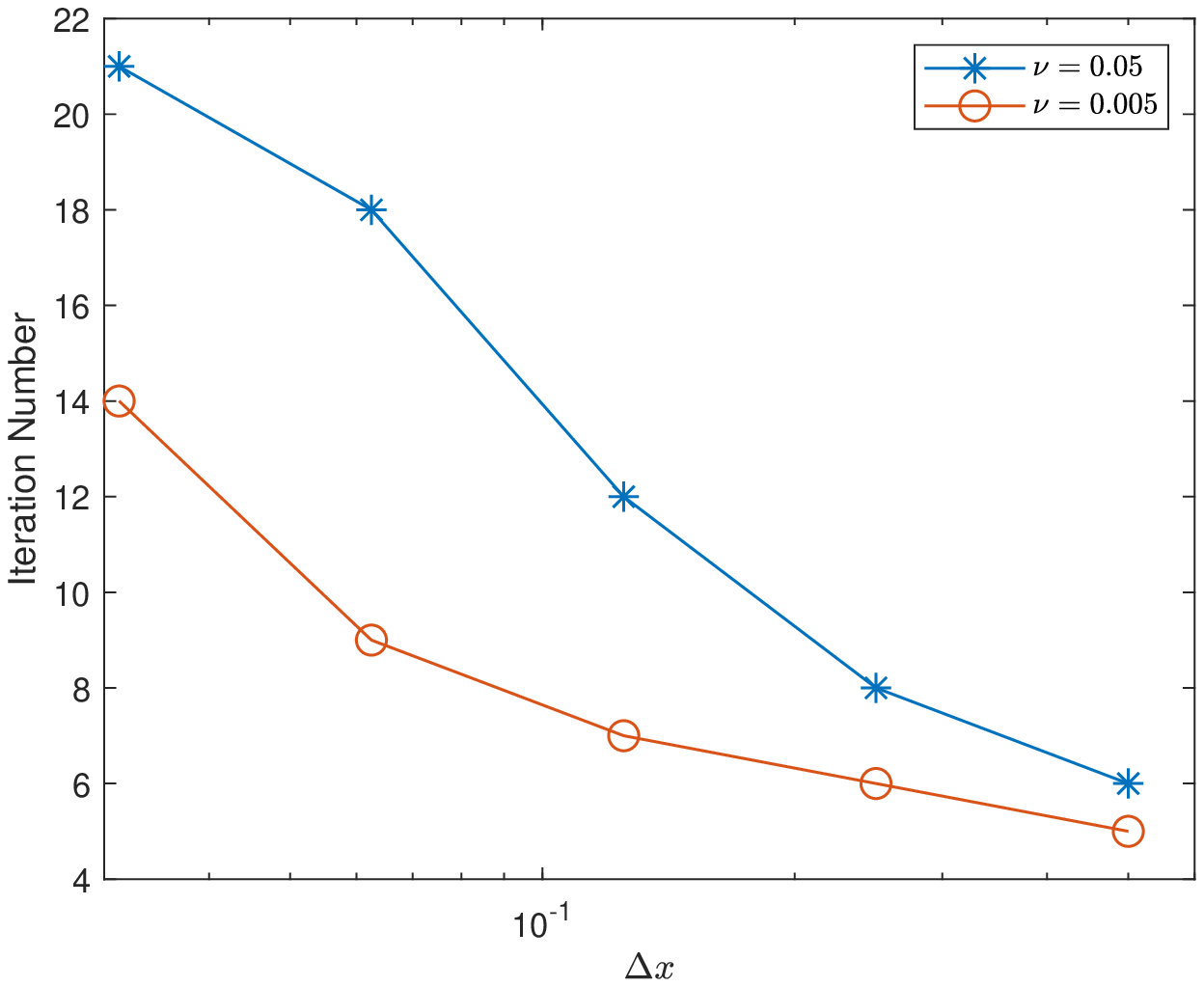}
\includegraphics[width=0.4\textwidth]{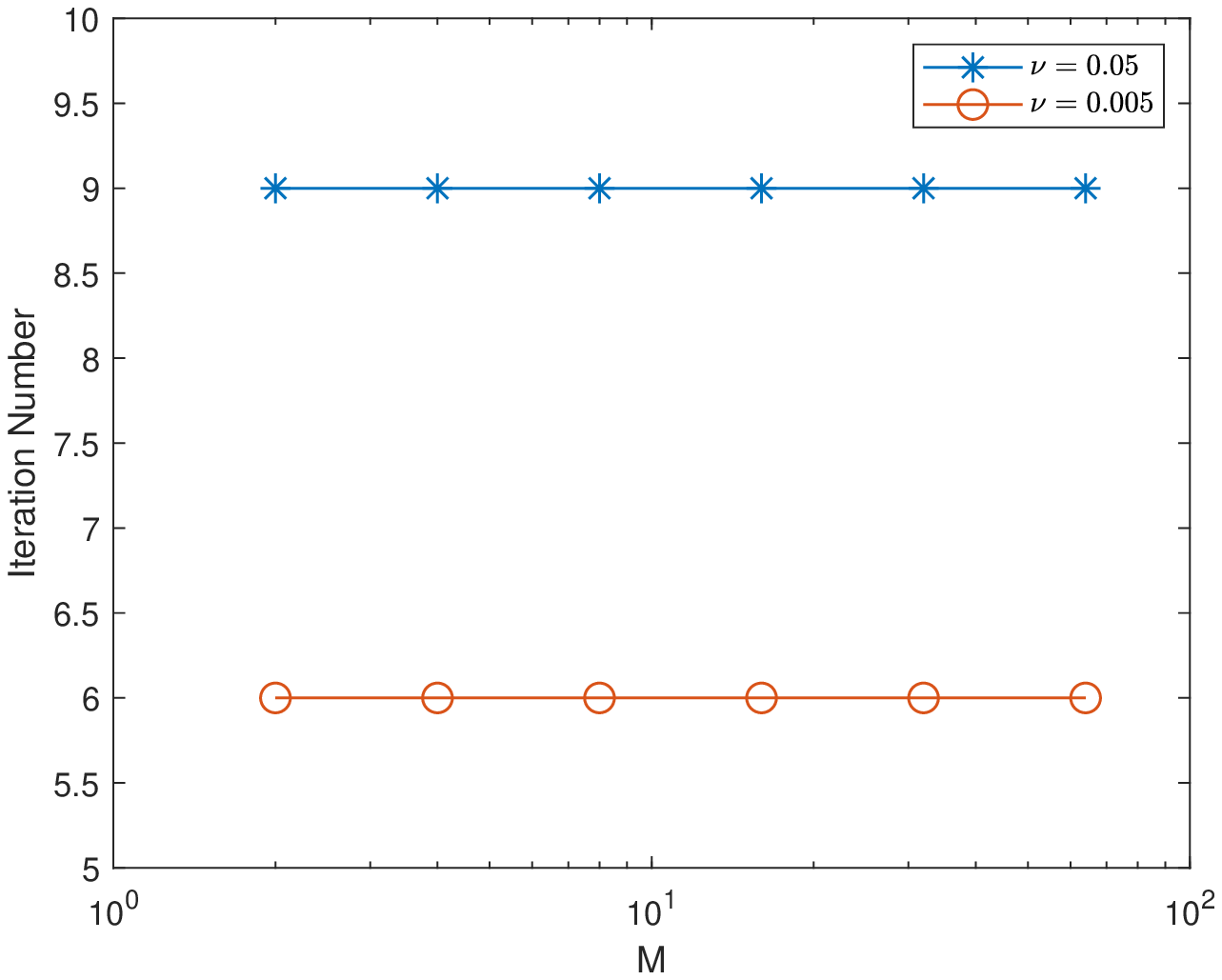}}
\caption{Top: The semi-log plot of the dependency of the Parareal-CG algorithm's convergence rate
on $\Delta T$, $\Delta x= 1/4$, $M=4$, $\Delta T$ ranges from $2^{-3}$ to $2^{-8}$.
Bottom left side: The semi-log plot of the dependency of the Parareal-CG algorithm's convergence rate
on $\Delta x$, $\Delta T= 1/64$, $M=4$, $\Delta x$ ranges from $2^{-1}$ to $2^{-5}$.
Bottom right side: The semi-log plot of the dependency of the Parareal-CG algorithm's convergence rate
on $M$, $\Delta T= 1/32$, $\Delta x=1/4$, $M$ ranges from $2^{1}$ to $2^{6}$.}
\label{Fig:ex2}
\end{figure}
We can draw the following conclusions by the figure.
\begin{itemize}
\item 
Each of the $\nu$ possesses robust convergence rate with respect to the change of $\Delta T$.
The convergence factor decreases as $\Delta T$ increases.
\item Each of the $\nu$ possesses robust convergence rate with respect to the change of $\Delta x$.
The convergence factor increases as $\Delta x$ varies from small to large
since the eigenvalues of $A_1$ and $A_2$ increases as $\Delta x$ reduces.
\item The convergence rate is insensitive to the choice of $M$ with $\Delta T=1/32$, which implies the high accuracy of the
Chebyshev-Gauss spectral collocation method.
\item All the experiments $\nu=0.005$ needs less iteration number than $\nu=0.05$.
\end{itemize}

\section{Conclusion}\label{sec:conclusion}
In this paper, we propose the Parareal-CG method for solving time-dependent differential equations,
where the coarse propagator $\mathcal{G}$ is fixed by backward Euler method
and the fine propagator $\mathcal{F}$ is chosen to be Chebyshev-Gauss spectral collocation method.
The algorithm does have a convergence factor around $0.333$,  
although the number of Chebyshev-Gauss points $M$ needs to be somewhat large.
The spectral radius of the matrix $A$ and the $\Delta T$
provide the lower bound of $M$ that guarantees such a expective convergence factor.
Some numerical experiments illustrate the accuracy and convergency of the presented algorithm.
\section*{Acknowledgments}
We would like to thank the anonymous reviewers for their valuable suggestions,
which helped us to improve this article greatly.
This work was partially supported by the Postgraduate Scientific Research Innovation Project of Hunan Province (No.CX20210012).


\begin{thebibliography}{99}
\bibitem{B2010}
X.~Bai, \emph{Modified Chebyshev-Picard iteration methods for solution of initial value and boundary value problems}
(Ph. D. dissertation), Texas A\&M University, College Station, TX, 2010.

\bibitem{BBMTZ2022}
L.~Baffico, S.~Bernard, Y.~Maday, G.~Turinici, and G.~Z\'{e}rah,
\emph{Parallel-in-time molecular-dynamics simulations dynamics simulations},
Phys. Rev. E 66 (3) (2002) 057701.

\bibitem{BJ2011}
X.~Bai, J.~L.~Junkins,
\emph{Modified Chebyshev-Picard iteration methods for orbit propagation},
J. Astronaut. Sci. 58 (4) (2011) 583--613.

\bibitem{BJMO2022}
D.~Q.~Bui, C.~Japhet, Y.~Maday, and P.~Omnes,
\emph{Coupling parareal with optimized Schwarz waveform relaxation for parabolic problems},
SIAM J. Numer. Anal. 60 (3) (2022) 913--939.

\bibitem{BW2020}
C.-E.~Brehier, X.~Wang,
\emph{On parareal algorithms for semilinear parabolic stochastic PDEs},
SIAM J. Numer. Anal. 58 (1) (2020) 254--278.

\bibitem{C1951}
J.~D.~Cole,
\emph{On a quasi-linear parabolic equation occurring in aerodynamics},
Quart. Appl. Math. 9 (1951) 225--236.

\bibitem{CN1963}
C.~W.~Clenshaw, H.~J.~Norton,
\emph{The solution of nonlinear ordinary differential equations in Chebyshev series},
Comput. J. 6 (1963/64) 88--92.

\bibitem{CSdSF2022}
S.~Costanzo, T.~Sayadi, M.~F.~de Pando, P.~J.~Schmid, and P.~Frey,
\emph{Parallel-in-time adjoint-based optimization-application to unsteady incompressible flows},
J. Comput. Phys. 471 (2022) 111664.

\bibitem{DBLM2013}
X.~Dai, C.~Bris, F.~Legoll and Y.~Maday,
\emph{Symmetric parareal algorithms for Hamiltonian systems},
ESAIM Math. Model. Numer. Anal. 47 (3) (2013) 717--742.

\bibitem{DSW2022}
F.~Danieli, B.~S.~Southworth and A.~J.~Wathen,
\emph{Space-time block preconditioning for incompressible flow},
SIAM J. Sci. Comput. 44 (1) (2022) A337--A363.

\bibitem{DW2021}
F.~Danieli, A.~Wathen,
\emph{All-at-once solution of linear wave equations},
Numer. Linear Algebra Appl. 28 (6) (2021) e2386.

\bibitem{FN1983}
T.~Feagin, P.~Nacozy, \emph{Matrix formulation of the Picard method for parallel computation},
Celestial Mech. 29 (2) (1983) 107--115.

\bibitem{FVM2022}
L.~Fang, S.~Vandewalle and J.~Meyers,
\emph{A parallel-in-time multiple shooting algorithm for large-scale PDE-constrained optimal control problems},
J. Comput. Phys. 452 (2022) 110926.

\bibitem{G2015}
M.~J.~Gander
\emph{50 years of time parallel time integration},
in Multiple Shooting and Time Domain Decomposition,
T.~Cararro, M.~Geiger, S.~K\"{o}rkel and R.~Rannacher, eds.
Springer-Verlag, Heidelburg, 2015.

\bibitem{GH2014}
M.~J.~Gander, E.~Hairer,
\emph{Analysis for parareal algorithms applied to Hamiltonian differential equations},
J. Comput. Appl. Math. 259 (2014) 2--13.

\bibitem{GKS2022}
I.~C.~Garcia, I.~Kulchytska-Ruchka and S.~Sch\"{o}ps,
\emph{Parareal for index two differential algebraic equations},
Numer. Algorithms 91 (1) (2022) 389--412.

\bibitem{GV2007}
M.~J.~Gander, S.~Vandewalle,
\emph{Analysis of the parareal time-parallel time-integration method},
SIAM J. Sci. Comput. 29 (2) (2007) 556--578.

\bibitem{GW2009}
B.~Guo, Z.~Wang, \emph{Legendre-Gauss collocation methods for ordinary differential equations},
Adv. Comput. Math. 30 (3) (2009) 249--280.

\bibitem{GW2010}
B.~Guo, Z.~Wang, \emph{A spectral collocation method for solving initial value problems of first order ordinary differential equations},
Discrete Contin. Dyn. Syst. Ser. B 14 (3) (2010) 1029--1054.

\bibitem{GW2020}
M.~J.~Gander, S.~Wu,
\emph{A diagonalization-based parareal algorithm for dissipative and wave propagation problems},
SIAM J. Numer. Anal. 58 (5) (2020) 2981--3009.

\bibitem{HW1991}
E.~Hairer, G.~Wanner,
\emph{Solving ordinary differential equation II: stiff and differential-algebraic problems},
Springer-Verlag, Berlin, 1991.

\bibitem{HWZ2019}
J.~Hong, X.~Wang and L.~Zhang,
\emph{Parareal exponential $\theta$-scheme for longtime simulation of stochastic Schr\"{o}dinger equations with weak damping},
SIAM J. Sci. Comput. 41 (6) (2019) B1155--B1177.

\bibitem{JL2023}
Y.~Jiang, J.~Liu,
\emph{Fast parallel-in-time quasi-boundary value methods for backward heat conduction problems},
Appl. Numer. Math. 184 (2023) 325--339.

\bibitem{LA1993}
M.~La Scala, A.~Bose,
\emph{Relaxation/Newton methods for concurrent time step solution of differential-algebraic equations in power system dynamic simulations},
IEEE Trans. Circuits Systems I Fund. Theory Appl. 40 (5) (1993) 317--330.

\bibitem{LDT2022}
C.~Lohmann, J.~D\"{u}nnebacke and S.~Tured,
\emph{Fourier analysis of a time-simultaneous two-grid algorithm using a
damped Jacobi waveform relaxation smoother for the one-dimensional heat equation},
J. Numer. Math. 30 (3) (2022), 173--207.

\bibitem{LLS2022}
F.~Legoll, T.~Leli\'{e}vre, and U. Sharma,
\emph{An Adaptive Parareal Algorithm: Application to the Simulation of Molecular Dynamics Trajectories},
SIAM J. Sci. Comput. 44 (1) (2022) B146--B176.

\bibitem{LMT2001}
J.~Lions, Y.~Maday and G.~Turinici,
\emph{A "parareal" in time discretization of PDE's},
C. R. Acad. Sci. Paris S\'{e}r. I Math. 332 (7) (2001) 661--668.

\bibitem{LW2022}
J.~Liu, Z.~Wang,
\emph{A ROM-accelerated parallel-in-time preconditioner for solving all-at-once systems in unsteady convection-diffusion PDEs},
Appl. Math. Comput. 416 (2022) 126750.

\bibitem{LWWZ2022}
J.~Liu, X.~Wang, S.~Wu and T.~Zhou,
\emph{A well-conditioned direct PinT algorithm for first- and second-order evolutionary equations},
Adv. Comput. Math. 48 (3) (2022) 1--29.

\bibitem{MSS2010}
T.~P.~Mathew, M.~Sarkis and C.~E.~Schaerer,
\emph{Analysis of block parareal preconditioners for parabolic optimal control problems},
SIAM J. Sci. Comput. 32 (3) (2010) 1180--1200.

\bibitem{PTSA2022}
K.~Pentland, M.~Tamborrino, D.~Samaddar, and L.~C.~Appel,
\emph{Stochastic Parareal: An Application of Probabilistic Methods to Time-Parallelization},
SIAM J. Sci. Comput. (2022) S82--S102.

\bibitem{STW2010}
J.~Shen, T.~Tang and L.~Wang, \emph{Spectral Methods: algorithms, analysis and applications},
Springer, Heidelberg, 2011.

\bibitem{SJ2003}
H. Schaub and J. L. Junkins, \emph{Analytical Mechanics of Space Systems},
Reston, VA: American Institute of Aeronautics and Astronautics, Inc, 1 edition, 2003.

\bibitem{SR2005}
G.~A.~Staff, E.~M.~R{\o}nquist,
\emph{Stability of the parareal algorithm}, in Domain Decomposition Methods in Science and Engineering,
Lecture Notes Comput. Sci. Eng. 40 (2005) 449--456.

\bibitem{W2015}
S.~Wu, \emph{Convergence analysis of some second-order parareal algorithms},
IMA J. Numer. Anal. 35 (3) (2015) 1315--1341.

\bibitem{WH2018}
S.~Wu, T.~Huang,
\emph{A fast second-order parareal solver for fractional optimal control problems},
J. Vib. Control 24 (15) (2018) 3418--3433.

\bibitem{WJ2019}
R.~Woollands, J.~L.~Junkins,
\emph{Nonlinear differential equation solvers via adaptive Picard-Chebyshev iteration: applications in astrodynamics},
J. Guid. Control Dyn. 42 (5) (2019) 1007--1022.

\bibitem{WL2020}
S.~Wu, J.~Liu,
\emph{A parallel-in-time block-circulant preconditioner for optimal control of wave equations},
SIAM J. Sci. Comput. 42 (3) (2020) A1510--A1540.

\bibitem{WM2016}
Z.~Wang, J.~Mu,
\emph{A multiple interval Chebyshev-Gauss-Lobatto collocation method for ordinary differential equations},
Numer. Math. Theor. Meth. Appl. 9 (4) (2016) 619--639.

\bibitem{WMOZ2022}
R.~Watschinger, M.~Merta, G.~Of and J.~Zapletal,
\emph{A parallel fast multipole method for a space-time boundary element method for the heat equation},
SIAM J. Sci. Comput. 44 (4) (2022) C320--C345.

\bibitem{WNL2020}
Y.~Wang, G.~Ni and Y.~Liu, \emph{Multistep Newton-Picard method for nonlinear differential equations},
J. Guid. Control Dyn. 43 (11) (2020) 2148--2155.

\bibitem{WZ2015}
S.~Wu, T.~Zhou,
\emph{Convergence analysis for three parareal solvers},
SIAM J. Sci. Comput. 37 (2) (2015) A970--A992.

\bibitem{WZ2021}
S.~Wu, T.~Zhou,
\emph{Parallel implementation for the two-stage SDIRK methods via diagonalization},
J. Comput. Phys. 428 (2021) 110076.

\bibitem{YW2015}
X.~Yang, Z.~Wang,
\emph{A Chebyshev-Gauss spectral collocation method for ordinary differential equations},
J. Comput. Math. 33 (1) (2015) 59--85.

\bibitem{YYZ2022}
J.~Yang, Z.~Yuan and Z.~Zhou,
\emph{Robust convergence of parareal algorithms with arbitrarily high-order fine propagators},
SSRN Electronic Journal, 2022.

\bibitem{ZWZLZ2020}
L.~Zhang, J.~Wang, W.~Zhou, L.~Liu and L.~Zhang,
\emph{Convergence analysis of parareal algorithm based on Milstein scheme for stochastic differential equations},
J. Comput. Math. 38 (3) (2020) 487--501.





\end{thebibliography}
\end{document}